%%%%%%%%%%%%%%%%%%%%%%% file template.tex %%%%%%%%%%%%%%%%%%%%%%%%%
%
% This is a general template file for the LaTeX package SVJour3
% for Springer journals.          Springer Heidelberg 2010/09/16
%
% Copy it to a new file with a new name and use it as the basis
% for your article. Delete % signs as needed.
%
% This template includes a few options for different layouts and
% content for various journals. Please consult a previous issue of
% your journal as needed.
%
%%%%%%%%%%%%%%%%%%%%%%%%%%%%%%%%%%%%%%%%%%%%%%%%%%%%%%%%%%%%%%%%%%%
%
% First comes an example EPS file -- just ignore it and
% proceed on the \documentclass line
% your LaTeX will extract the file if required
% \begin{filecontents*}{}
% %!PS-Adobe-3.0 EPSF-3.0
% %%BoundingBox: 19 19 221 221
% %%CreationDate: Mon Sep 29 1997
% %%Creator: programmed by hand (JK)
% %%EndComments
% gsave
% newpath
%   20 20 moveto
%   20 220 lineto
%   220 220 lineto
%   220 20 lineto
% closepath
% 2 setlinewidth
% gsave
%   .4 setgray fill
% grestore
% stroke
% grestore
% \end{filecontents*}
%
\RequirePackage{fix-cm}
\documentclass[smallcondensed]{svjour3}     % onecolumn (ditto)
\smartqed  % flush right qed marks, e.g. at end of proof
\usepackage{siunitx }
\usepackage{subfigure}
\usepackage[utf8]{inputenc}
\usepackage[colorinlistoftodos,prependcaption,textsize=tiny]{todonotes}
\usepackage{algpseudocode}
\usepackage{algorithm}
\usepackage{rotating}
\usepackage{adjustbox}
\usepackage{graphicx}
\graphicspath{ {./images/} }
\usepackage{bigstrut}
\usepackage{multicol}
\usepackage{multirow}
\usepackage{amsmath}
\usepackage[english]{babel}
\usepackage[T1]{fontenc}
\usepackage{hyperref}

\newtheorem{assumption}{Assumption}

% please place your own definitions here and don't use \def but
% \newcommand{}{}
\newcommand{\R}{{\mathbf R}}

\DeclareMathOperator*{\argmin}{arg\,min}

% Insert the name of "your journal" with
% \journalname{myjournal}
%
\begin{document}

\title{Block Layer Decomposition schemes\\ for training Deep Neural Networks\footnote{This is a preprint of an article published on Journal of Global Optimization. The final authenticated  version is available online  at \url{https://doi.org/10.1007/s10898-019-00856-0}.}}

%\subtitle{An overview}

\titlerunning{Online Block Layer for DNN}        % if too long for running head

\author{Laura Palagi\thanks{The author was partially supported by the project
\emph{Distributed optimization algorithms for Big Data} of Sapienza n. RM11715C7E49E89C}, Ruggiero Seccia}

\authorrunning{L. Palagi, R. Seccia} % if too long for running head

\institute{Dip. di Ingegneria informatica automatica e gestionale A. Ruberti
Sapienza - University of Rome
\at
             Via Ariosto 25 - 00185 Roma\\
              Tel.: +39-06-77274081\\
                \email{laura.palagi@uniroma1.it, ruggiero.seccia@uniroma1.it}           %  \\
%             \emph{Present address:} of F. Author  %  if needed
}

\date{Received: date / Accepted: date}
% The correct dates will be entered by the editor

\maketitle

\begin{abstract}
Deep Feedforward Neural Networks' (DFNNs) weights estimation relies on the solution of a very large nonconvex optimization problem that may have many local (no global) minimizers, saddle points and large plateaus. As a consequence, optimization algorithms can be attracted toward local minimizers which can lead to bad solutions or can slow down the optimization process. Furthermore, the time needed to find good solutions to the training problem depends on both the number of samples and the number of variables.
In this work, we show how Block Coordinate Descent (BCD) methods can be applied to improve performance of state-of-the-art algorithms by avoiding bad stationary points and flat regions. We first describe a batch BCD method ables to effectively tackle the  network's depth and then we further extend the algorithm proposing a \textit{minibatch} BCD framework able to scale with respect to both the number of variables and the number of samples by embedding a BCD approach into a minibatch framework. By extensive numerical results on standard datasets for several architecture networks, we show how the application of BCD methods to the training phase of DFNNs permits to outperform standard batch and minibatch algorithms leading to an improvement on both the training phase and the generalization performance of the networks.
\keywords{Deep Feedforward Neural Networks \and Block coordinate decomposition \and Online Optimization \and Large scale optimization}

\end{abstract}
\section{Introduction}
In this paper we consider the problem of training a Deep Feedforward Neural Network (DFNN) being available a set of training data $\lbrace x_p,
y_p\rbrace$ for $p = 1, \ldots, P$. {According to the Empirical Risk Minimization (ERM) principle}, training a DFNN can be formulated as an unconstrained non convex optimization problem
\begin{equation}\label{eq:ERM}
\min_{w\in \rm I\!R^n}\ f(w)=\frac{1}{P}\sum_{p=1}^PE_p(w)+g(w)
\end{equation}
where the first term represents the average loss, being $E_p$ a measure of the loss on the single sample $p$, and the second term is a regularization term added to improve generalization properties which can also help from an optimization viewpoint by inducing a convexification of the objective function.

This problem is known to be a very hard optimization problem both because is highly \textit{non-convex}, which implies the presence of stationary points that are no global minimizers, and because it is characterized by the presence of plateaus and cliffs which can extremely slow down the speed of convergence of gradient-based optimization algorithms \cite{saddle,palagi}. This peculiar structure together with the large dimension $n$ when considering wide and deep networks and/or the large number of samples $P$ when considering large training set leads to a high computational effort for finding a solution of the optimization problem, being objective function and gradient evaluations the heaviest tasks in the optimization process %. Moreover, as highlighted in (\ref{eq:ERM}), the optimization problem in these models is further undermined because the time needed for training such models strongly depends on two main factors: the number of parameters $w\in\rm I\!R^n$ to be estimated and the number of samples in the dataset $P$. Indeed, for too large networks or datasets, each objective function and gradient evaluation requires a lot of time to be computed leading to a very slow optimization process. 

The optimization problem behind the training phase of DFNNs has been principally tackled with two different approaches: \textit{batch} algorithms (a.k.a. \textit{offline} methods), which at each iteration consider the whole  dataset to update the model's weights; \textit{minibatch} algorithms (a.k.a. \textit{online} methods), which at each iteration consider only part of the samples in the training set, a minibatch, to update the weights of the model. The former approach can be effectively used when the number of variables $n$ and the number of samples $P$ in the dataset are not too large. The latter approach, by considering at each iteration only a small subset of all the available data, is more efficient when the dataset is composed by a large number of samples $P$. Although \textit{minibatch} algorithms are less prone to issues when dealing with Big data, they are still affected by the dimension $n$ of the problem, which in DFNN can grow fast. 

An effective solution to solve optimization problems where the number of variables $n$ is very large is represented by \textit{Block Coordinate Descent} (BCD) methods {
\cite{Wright2015,Nesterov2017,Beck2013,Grippof1999}}, which at each iteration update only a subset of the whole variables. In contrast with \textit{minibatch} methods, BCD methods present the opposite behaviour: they are not heavily affected by the number of variables but they still suffer when the number of samples $P$ becomes too large.

{
In this paper, we study how the layered structure of DFNNs can be leveraged to define efficient BCD methods for speeding up the optimization process of these models. In particular, we introduce two different optimization frameworks: 

\begin{itemize}
\item First, following the work done in \cite{Grippo2016} for shallow networks,  we define a general \textit{batch} BCD method for deep networks. We analyze the impact of \textit{backpropagation} procedure on the choice of the blocks of variables and we show how a decomposition approach can help to escape from "bad" attraction regions.
\item Then, exploiting both the variable decomposition, typical of BCD methods, and the sample-wise decomposition, typical of \textit{minibatch methods} we embed the BCD scheme of above into a \textit{minibatch} framework, defining a general \textit{minibatch} BCD framework. Numerical results suggest how this double decomposition leads to a speed up in the solution of the non-convex optimization problem (\ref{eq:ERM}).
\end{itemize}
}

The paper is organized as follows. In Section \ref{Related Works} an overall literature review on both BCD algorithms, \textit{minibatch} algorithms and how these two methods have already been combined is provided. In Section \ref{Preliminaries}  some useful notation is provided and the optimization problem is formulated. In Section \ref{Batch BCD} and \ref{Minibatch BCD}, respectively the \textit{batch} BCD framework and the \textit{minibatch} BCD framework are defined. In these sections, each algorithm is followed by some considerations concerning the optimization strategy implemented and numerical results on some test sets. Finally, in Section \ref{Conclusions}, conclusions and suggestions on further direction of investigations will be discussed.

\section{Related works}\label{Related Works}
\textit{Batch} BCD algorithms are effective methods for large scale optimization problems. By updating a subset of the variables at each iteration, the application of such methods for large scale problems has already been proved to be successful in many applications \cite{BCD_for_LASSO,Nesterov2017,Wright2015,Grippof1999}.

Concerning the application of BCD methods for training NNs, some approaches have already been carried out. 
Extreme Learning Machines (ELMs) \cite{Huang2006,Huang2011} are %a Feedforward Neural Network model 
algorithms where the different role played by the output weights, namely weights to the last layer, and hidden weights are considered. Hidden weights are randomly fixed,
while output weights are the tuned by an optimization procedure. 
The great advantage of ELMs is that when a linear unit is used in the output layer, the optimization of the output weights is a Linear Least Square problem. 
From the optimization point of view ELMs so not possess convergence properties to a stationary point.
More sophisticated BCD methods for training Neural Networks have already been proposed. In \cite{Buzzi2001} and in \cite{Grippo2016} globally convergent schemes for training respectively Radial Basis Function (RBF) NNs and MultyLayer Perceptron (MLP) were introduced where the convexity of the objective function when optimizing wrt the output weights was leveraged to improve the optimization process.  These proposed frameworks have been focused on Shallow Neural Networks (SNNs), namely networks with only one hidden layer. At the best of authors' knowledge, this is the first attempt to study how a BCD  scheme could be applied to deep networks which are known to be characterized by harder optimization issues.

Concerning \textit{minibatch} algorithms, the first proposed methods  for training deep networks were Incremental Gradient (IG) \cite{DBLP:journals/corr/Bertsekas15c,Bertsekas:1996:ILS:588880.588920} and Stochastic Gradient  (SG) \cite{Robbins&Monro:1951,Bertsekas2000,Bottou10large-scalemachine}, where the main difference is on how minibatches are picked at each iteration, in an incremental deterministic order in IG or randomly in SG. 
The rate of convergence of these methods is usually slower than standard \textit{batch} methods and depends on the variance in the gradient estimations \cite{Nocedal}. To reduce the variance in the gradient estimation and speed up the optimization process, different approaches were implemented such as \textit{Gradient Aggregation}, where the estimation of the gradient is improved by considering the estimated gradients in the previous iterations. Methods like these are: SVRG \cite{NIPS2013_4937}, SAGA \cite{NIPS2014_5258}, ADAgrad \cite{Duchi:2011:ASM:1953048.2021068} and Adam  \cite{Konur2013}. As already pointed out, these methods are really efficient when the number of data is huge but are not able to scale with respect to the number of variables that for Deep Learning networks can blow up.

The behaviour of \textit{minibatch} BCD methods has already been studied in the strongly convex case where a geometric rate of convergence in expectation has been established \cite{WangB14a} and its effectiveness has been tested in strongly convex sparse problems such as LASSO \cite{zhao} or Sparse Logistic Regression \cite{chauhan17a}. Concerning the application of \textit{minibatch} BCD methods in training Neural Networks, in \cite{bravi2014incremental} a two block decomposition scheme is presented where at each iteration the output weights are exactly optimized using the full batch  while the hidden weights are updated using a minibatch strategy with a step of IG.

This paper provides a general framework for the implementation of batch and online BCD methods in training DFNNs. First, filling the gap left out in \cite{Grippo2016}, we study the effects of variable decomposition from both a computational point of view and an algorithmic performance point of view in \textit{batch } algorithms for deep networks. Afterwards, we introduce a general \textit{minibatch} BCD procedure able to speed up the training process in DFNNs and improve model's generalization properties by leveraging both the advantages of BCD and \textit{minibatch} methods.

\section{Problem Definition} \label{Preliminaries}
Training a DFNN fits in the class of supervised learning, where a 
 a set of data $\lbrace x_p,y_p\rbrace_{p=1}^P$, with $x_p \in {\rm I\!R}^d$ representing the input features and $y_p \in {\rm I\!R}^m$ the corresponding label are given.
A DFNN with $d$ inputs and $m$ outputs is represented by an acyclic oriented network consisting of neurons arranged in $L$ layers connected in a feed-forward way. Each neuron $j$ in a layer $\ell=1,\dots, L$ is characterized by an activation function $g(\cdot)$, that for the sake of simplicity we assume the same for all the neurons, with the only exception of the output layer which has a linear activation function. We introduce the following notation that will be useful in the following.
 
 \begin{itemize}
 \setlength\itemsep{0.33em}
 \item each layer $\ell=1,\dots L$ consists of $N_{\ell}$ neurons, being $N_{0}=d$  and $N_{L}=m$ respectively the input and the output layers;

\item the index set ${\cal L}=\lbrace 1,...,L\rbrace$, where the general index $\ell$ is used to refer to the block of variables $w_\ell$;

\item the matrix  $w_\ell\in {\rm I\!R}^{N_{\ell-1}\times N_\ell}$ representing the weights matrix associated to the  layer $\ell\in \cal L$;

\item  we may refer to the variable $w$ as composed by $L$ blocks of variables, $w=(w_1,\dots,w_L)$;

\item $w_\ell^{ji}$ is the weight  from neuron $i$ in layer $\ell-1$ to neuron $j$ in layer $\ell$;

\item $z_{\ell}^i$ is the output of neuron $i$ in layer $\ell$, $z_{\ell}^i=g(a_{\ell}^i)$, where 
%\item $a_\ell^i$ as the input of neuron $i$ in layer $\ell$, 
$\displaystyle a_{\ell}^i=\sum_{k=1}^{N_{\ell-1}} w_{\ell-1}^{ik} z_{\ell-1}^k$
%\item $w_0$ as the input features, $X_i\in {\rm I\!R}^{P\times d}$.
\end{itemize}
 
Using this notation, given the sample $x_p$, we have that the output $\tilde{y}_p$  of the DFNN can be written as: 
\begin{equation}\label{eq:output}
         \tilde{y}_p(w)=\tilde{y}(w;x_p)=w_Lg(w_{L-1}g(w_{L-2}\dots g(w_1x_p))).\end{equation}
We considered as error function the convex loss function MSE,  and  we applied a \textit{l-2} norm regularization so that the overall unconstrained problem is continuously differentiable.  With these choices, the optimization problem  \eqref{eq:ERM} can be written as follows:
\begin{equation}\label{optimization_problem}
\underset{w}{\text{min}}f(w)=\frac{1}{P}\sum_{p=1}^P E_p+\rho\Vert w\Vert ^2=\frac{1}{P}\sum_{p=1}^P\|\tilde{y}_p(w)-y_p\|^2 +\rho\Vert w\Vert ^2
\end{equation}
As already said, this problem is highly non-convex with many local minima, saddle points and cliffs which yield to a very hard optimization problem (see \cite{palagi} for a review of recent results on local- global minima issue in DNNs). Furthermore, the dependency on the number of samples and the dimension of the network make the problem even harder slowing down objective function and gradient evaluations. In this setting it is crucial to define optimization frameworks which allows to escape from bad regions and which can scale with respect to the number of samples $P$ and the number of variables $n$.\

\section{Batch  Block Coordinate Decomposition algorithm}\label{Batch BCD}
BCD methods are iterative algorithms where, given the set of all the variables ${\cal W}$, at each iteration $k$ only a suitable subset ${\cal J}^k$, the \textit{ working set}, is selected and only variables belonging to this set are updated by finding a new point $\tilde{w}_i$.
$$w_i^{k+1} = 
\begin{cases}
w_i^k &\qquad \text{if $i\not\in{\cal J}^k$ }       \\
\tilde{w_i} & \qquad \text{ if  $i\in {\cal J}^k$}
\end{cases}$$
In DFNNs, at each iteration, the layered structure of the network can be exploited to select the working set. Indeed, the weights of each layer $w_\ell$ enters in the objective function in a nested way and a "natural" decomposition of $w$ appears that can be fruitful used. 

In this Section, taking steps form the algorithmic scheme proposed in \cite{Grippo2016} for shallow networks, we define an efficient \textit{Batch} (BCD) scheme for the training problem of deep networks and analyze the most critical issues in its characterization. In order to show the effectiveness of the proposed choices we provide extensive numerical results on a set of benchmark datasets to point out the role of variable decomposition when dealing with deep and wide networks.

\subsection{Block Layer Decomposition (BLD) scheme}
%The proposed framework for training DFNNs is a BCD algorithm, where blocks of variables are directly defined by layers. Given the index set of all the layers ${\cal L}$,
We present a BCD scheme, called Block Layer Decomposition (BLD), where blocks of variables are directly defined by the layers of the network $w_\ell$, $\ell=1,\dots,L$. At the basis of this choice is the fact that fixing the  weights of some layer may highlight nice structures of the optimization problem with respect to the other variables, the working set, which makes it easier to solve (cfr \cite{Huang2006,Grippo2016}). 
%{This is the case e.g. in Extreme Learning Machine (ELM),originally defined for shallow networks, where only the weights of the last (output) layer are tuned while weights of the hidden layers  are randomly fixed. Indeed the corresponding training problem turns out to be a linear least square problem. Grippo et. al in cite{Grippo2016} proposed  a BCD scheme for SNNs where the optimization w.r.t. to the weights of the two layers is alternated. We propose to generalize this approach to the case of deep networks.
%} \todo{Mah ! Repetita iuvant.... facile che un referee legga prima qui che non la literature review se è esperto}

In order to define the  BLD algorithm, we first report  in Algorithm \ref{alg:Armijo} the Armijo Linesearch procedure that is used in the definition of the BLD algorithm.

%\begin{figure}[htb]
  \begin{center}
  \begin{minipage}{.75\linewidth}
    \begin{algorithm}[H]
\caption{Armijo Linesearch}
    \label{alg:Armijo}
    \begin{algorithmic}[1]
\State Given $a>0,\gamma\in (0,1), \delta\in (0,1)$
\State Fix $\alpha=a$
\While {$f(w^k+\alpha d^k)>f(w^k)+\gamma\alpha\nabla f(w^k)^Td^k$}
\State $\alpha=\delta\alpha$
\EndWhile
\State Return $\alpha^k=\alpha$
\end{algorithmic}
    \end{algorithm}
  \end{minipage}
\end{center}
  %\end{figure}

\medskip

 At each iteration $k$ the proposed \textit{batch} BLD method  selects as working set an index $\ell^k\in {\cal L}$ which represents the block of variables belonging to layer $\ell$ and updates only the block $w_\ell^{k}$ by finding a point $\tilde{w}_\ell$ which satisfies some conditions. Then a new index $\ell_{k+1}$ is extracted according to some rule and the same steps are repeated until a stopping criterion, such as a maximum number of iterations or the norm of the gradient below a certain threshold, is met.

In particular, the condition which must be satisfied by the  new point block $\tilde{w}_\ell$ are:
\begin{enumerate}
\item The objective function evaluated at the new point $w^{k+1}=(w_1,...,\tilde{w}_\ell,...,w_L)$  is not worse than the value obtained along the steepest descent direction $d_\ell^k=-\nabla_{w_\ell} f(w^k)$ with the stepsize chosen by an Armijo Linesearch
\begin{equation}\label{cond1}
f(w_1^k,\dots,\tilde{w}_\ell,\dots,w_\ell^k)\leq f(w_1^k,\dots,w_\ell^k+\alpha^kd_\ell^k,\dots,w_\ell^k)
\end{equation}
\item The difference between the objective function evaluated in the  point $w^k$ and in the new one satisfies 
\begin{equation}\label{cond2}
f(w_1^k,\dots,\tilde{w}_\ell,\dots,w_\ell^k)- f(w_1^k,\dots,w_\ell^k,\dots,w_\ell^k)\leq-\sigma(\Vert\tilde{w}_\ell-w_\ell^k\Vert)
\end{equation}
where
$\sigma $  is a forcing function  satisfying
 $$ \lim_{k \to \infty}\sigma(t_k)=0 \implies \lim_{k \to \infty}t_k=0
  $$
\end{enumerate}

The Batch BLD scheme is reported in Algorithm \ref{alg:BCD}.

% \begin{algorithm}
% \caption{\textit{Batch} BCD framework for DNNs}
% \label{alg:BCD}
% \begin{algorithmic}[1]
% \State Given $\lbrace x_p,y_p\rbrace_{p=1}^P ,{\cal L}=\lbrace 1,...,L\rbrace $, $\gamma\in (0,1)$
% \State Let $w^0\in\R^n$ and set  $k=0$; 
% \While {(stopping criterion not met)}
% \State Choose an index $\ell^k\subseteq {\cal L}$
% \State $d_\ell=-\nabla_{w_\ell^k}f(w_1^k,\dots,w_\ell^k,\dots,w_L^k)$
% \If {$d_\ell^k \neq 0$}
% \State Compute $\alpha^k $ such that 
% $f(w^k+\alpha^k d^k)\le f(w^k)+\gamma\alpha^k\nabla f(w^k)^Td^k$ 
% \EndIf
% \State Find a point $\tilde{w}_\ell^{k+1}$ such that (\ref{cond1}) and (\ref{cond2}) are satisfied
% \State $w^{k+1}=(w_1^k,\dots,\tilde{w}_\ell^{k+1},\dots,w_L^k)$
% \State $k=k+1$
% \EndWhile
% \end{algorithmic}
% \end{algorithm}

\begin{algorithm}
\caption{\textit{Batch} Block Layer Decomposition scheme}
\label{alg:BCD}
\begin{algorithmic}[1]
\State Given $\lbrace x_p,y_p\rbrace_{p=1}^P ,{\cal L}=\lbrace 1,...,L\rbrace $
\State Choose $w^0\in\rm I\!R^n$ and set  $k=0$; 
\While {(stopping criterion not met)}
\State Select an index $\ell^k\subseteq {\cal L}$
\State Set $d_{\ell^k}=-\nabla_{w_{\ell^k}}f(w^k)$
\If {$d_{\ell^k} \neq 0$}
\State Compute $\alpha^k $ along $d_{\ell^k}$ with an Armijo Linesearch
\State Find a point $\tilde{w}_{\ell^k}$ such that (\ref{cond1}) and (\ref{cond2}) are satisfied
\Else 
\State ${\tilde{w}}_{\ell^k}={w}^k_{\ell^k} $ 
 
\EndIf
\State Update $w^{k+1}=(w_1^k,\dots,\tilde{w}_{\ell^k},\dots,w_L^k)$
\State Set $k=k+1$
\EndWhile
\end{algorithmic}
\end{algorithm}

Concerning convergence to stationary non-maxima points of the {Batch} BLD framework, we have that it fits within the general decomposition scheme  proposed in Section 7 of \cite{Grippof1999}. To prove convergence we need to introduce the following assumption on the selection of the index set $\ell^k$:

\begin{assumption}[Cyclic updating rule] \label{cyclic}
Blocks of variables $w_\ell$  must be updated in a cyclical order
\end{assumption}

Under this assumption, we can state the following convergence result whose proof, for the sake of completeness, is reported in the Appendix.

\begin{theorem}\label{th:convBLD}
Algorithm BLD with a cyclical selection of the blocks generates a sequence of points $\lbrace w^k\rbrace$ such that
\begin{equation}\label{convergo}
\lim_{k\rightarrow \infty}\nabla f(w^k)=0
\end{equation}
\end{theorem}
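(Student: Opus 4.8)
The plan is to show that the Batch BLD scheme is a special instance of the general Gauss–Seidel (cyclic) block decomposition framework from Section 7 of \cite{Grippof1999}, and then invoke its convergence theorem. To do this, I would first verify the standing regularity hypotheses: the objective $f$ in \eqref{optimization_problem} is continuously differentiable (guaranteed by the smooth activation $g$, the linear output layer, the MSE loss, and the $\ell_2$ regularizer), and the level set $\{w : f(w)\le f(w^0)\}$ is compact. Compactness follows because the regularization term $\rho\|w\|^2$ is coercive, so $f(w)\to\infty$ as $\|w\|\to\infty$; since Algorithm~\ref{alg:BCD} is monotone nonincreasing (see below), the whole sequence $\{w^k\}$ stays in this compact set and hence admits limit points.

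Next I would establish the two properties that the framework requires of each block update, both of which are exactly conditions \eqref{cond1} and \eqref{cond2}. Property one is \emph{sufficient decrease}: condition \eqref{cond1} forces the new block $\tilde w_{\ell^k}$ to do at least as well as one Armijo step along the negative block-gradient $d_{\ell^k}=-\nabla_{w_{\ell^k}}f(w^k)$, so by the Armijo acceptance test the objective decreases at least by a quantity controlled by $\alpha^k\|\nabla_{w_{\ell^k}}f(w^k)\|^2$; this yields monotonicity and, via a standard telescoping argument using compactness (hence boundedness of $f$ from below on the level set), forces $\alpha^k\|\nabla_{w_{\ell^k}}f(w^k)\|^2\to 0$. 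Property two is that condition \eqref{cond2}, through the forcing function $\sigma$, links the decrease to the step length $\|\tilde w_{\ell^k}-w_\ell^k\|$; summability of the decreases then gives $\sigma(\|\tilde w_{\ell^k}-w_\ell^k\|)\to 0$, and by the defining property of $\sigma$ this implies $\|\tilde w_{\ell^k}-w_\ell^k\|\to 0$, i.e. successive iterates become arbitrarily close. These two facts are precisely the ingredients the general scheme packages into its convergence statement.

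I would then combine asymptotic vanishing of the displacements with the cyclic sweep guaranteed by Assumption~\ref{cyclic}. Because each of the $L$ blocks is touched exactly once every $L$ iterations, and consecutive iterates converge together, along any convergent subsequence $w^{k}\to\bar w$ one can pass to the limit in the optimality conditions of the individual block minimizations. The standard Gauss–Seidel argument (using that a one-dimensional-along-each-block stationarity, accumulated over a full cycle, yields full stationarity when the displacements vanish) shows that at every limit point $\bar w$ one has $\nabla_{w_\ell}f(\bar w)=0$ for all $\ell$, hence $\nabla f(\bar w)=0$. Since this holds for every limit point and the Armijo/forcing machinery actually drives the block gradients themselves to zero, one upgrades subsequential stationarity to the full limit \eqref{convergo}.

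The main obstacle I anticipate is the passage from block-wise stationarity to joint stationarity in the presence of the nonconvex, nested dependence of $f$ on the layer weights $w_\ell$: the Armijo test alone only certifies descent relative to the steepest block direction, so the delicate point is to rule out the classical Gauss–Seidel failure mode in which successive blocks oscillate without the gradient vanishing. This is exactly where condition \eqref{cond2} with the forcing function $\sigma$ does the decisive work, ensuring the displacements shrink faster than any residual gradient could sustain; verifying that our block updates satisfy the precise technical hypotheses (continuity of the gradient on the compact level set, and the interplay of \eqref{cond1}–\eqref{cond2}) of the Section~7 theorem in \cite{Grippof1999} is the crux, after which the cited result applies verbatim.
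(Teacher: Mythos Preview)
Your proposal is correct and follows essentially the same route as the paper: use coercivity of the $\ell_2$ regularizer to obtain compact level sets, exploit condition~\eqref{cond1} together with the Armijo property (Lemma~\ref{Lemma_Arm}) to force each block gradient to vanish at accumulation points, use condition~\eqref{cond2} to drive $\|w^{k+1}-w^k\|\to 0$, and then let the cyclic sweep glue the block-wise stationarity into $\nabla f(\bar w)=0$ at every limit point. The paper carries out these steps directly in the appendix rather than invoking \cite{Grippof1999} as a black box, but the logical skeleton is identical.
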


Algorithm \ref{alg:BCD} must be further characterized depending on how the index $\ell ^k$ is chosen and how  the new point $\tilde{w}_\ell$ is computed. These choices affect both the convergence properties of the algorithm and its computational efficiency. In the following, we discuss  the possible choices that satisfy assumptions to guarantee convergence of BLD and allow to save computations, mainly due to the reduction of the needed time for objective function and gradient evaluations.

\subsection*{Selection of the \textit{working set} $\ell^k$} \label{sec: J}
Before providing general rules on how to choose the index $\ell^k$, we need to recall how the \textit{backpropagation} procedure works in order to comprehend the crucial role played by the working set definition in the BLD scheme.

In DFNNs, the main effort in computing $\nabla f(w)$  stays in evaluating $\sum_{p=1}^P \nabla_w E_p$.  Thanks to the chain rule, each element $\nabla_w E_p$ is computed  as
\begin{equation} \label{eq: backprop1}
\frac{\partial E_p(w)}{\partial w_\ell^{ji}}=\frac{2}{P}z_{\ell-1}^i\delta_\ell^j 
\end{equation}
where the $z_{\ell}$  are obtained by forward propagation of the input and the $\delta_\ell$ 
by backward propagation throughout the layers of the error $e=\tilde{y}_p-y_p$
\begin{equation} \label{eq: backprop2}
\delta_L^j=eg'(a_L)\ \qquad \delta_\ell^j=\sum_{k=1}^{N_{\ell+1}}\delta_{\ell+1}^k w_{\ell+1}^{kj}g'(a_\ell^j) \qquad l\leq L-1
\end{equation}
where  $g'$ is the partial derivative of $g$.
%\todo{va bene?}
\begin{figure}
\centering
\includegraphics[scale=0.35]{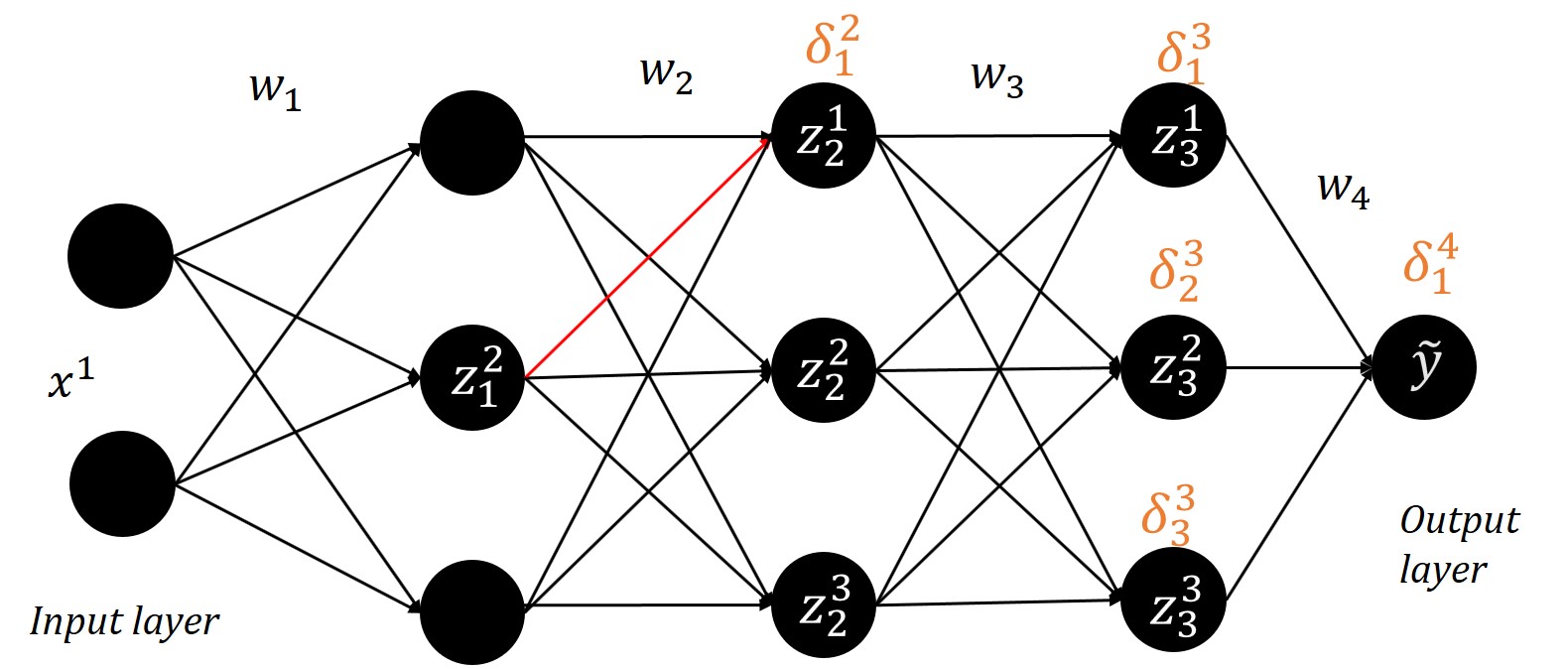}
\caption{Example of how \textit{backpropagation} works to get the derivative wrt a hidden weight}
\end{figure}
 
This procedure is called \textit{backpropagation} because, 
to get derivatives of  variables in layer $\ell$, we need to evaluate the $\delta$s of all the following layers, starting from the last one $\delta_L$ and backpropagating the $\delta$s until $\delta_{\ell}$, being the general  $\delta_\ell$ depending on $\delta_{\ell+1}$. Then, if the full gradient $\nabla f(w^k)$ must be evaluated, as in standard gradient based methods, forward and backward propagation involves all the layers of the network. On the other hand, when only the gradient wrt a block $\ell$ is needed, as in BLD scheme, the  propagation steps go only over \textit{part} of the network, reducing the computational effort.

In order to save computations in the evaluation of the gradient, the chosen block layer decomposition turns out to be the more efficient. Indeed, looking at (\ref{eq: backprop1}) and (\ref{eq: backprop2}), it is clear that blocks of variables defined with interlacing weights over the layers would imply the forward and backward propagation along the full network for the computations of $\delta$s. Furthermore, we highlight that also a definition of blocks of variables by neurons, as done in \cite{Grippo2016} for shallow networks, turns out to be inefficient, since to update the general neuron $j$ at layer $\ell$ we need to compute $\delta_\ell$ which depends by all the $\delta$s of the following layers which would be computed but not further used. Another possible choice for exploiting the backpropagation procedure consists in updating at iteration $k$ the weights of layer $\ell^k$  and all the successive layers $\ell^k+1, \dots, L$, being the following $\delta$s already computed to evaluate $\delta_\ell$.  This blocks selection rule would be more efficient since it would imply the utilization of all the $\delta$s computed at each iteration. However, this procedure leads to an unbalanced decomposition update because the last layers are optimized  more often  than the first ones. Extensive numerical results (not reported here for the lack of space) shows a worst performance as final value of the objective function. This behaviour is arguably due to the fact that this  strategy gives more importance to some layers than others making the algorithm converge to very poor regions of the problem and severally harming the overall training process.

As regard the potential strategies for choosing the index $\ell^k$, Assumption (\ref{cyclic})
 can be ensured by using several block selection rules such as:
\renewcommand\labelitemi{-}
\begin{itemize}
\item \textbf{\textit{Forward} selection}. Layers are selected according to the forward propagation $\lbrace 1,2,...,L\rbrace$.
\item \textbf{\textit{Backward} selection}. layers are selected according to the backward propagation $\lbrace L,L-1,...,1\rbrace$.

\item \textbf{Random without replacement.} The blocks are selected with a cyclic order that changes after all the blocks have been updated. This corresponds to a cyclic rule with random reshuffling. In this case  
\end{itemize}

\subsection*{Block $\tilde{w}_\ell$ update }
{Once  block $\ell^k$ is selected, a point $\tilde w_{\ell^k}$ satisfying conditions (\ref{cond1}) (\ref{cond2}) must be determined.
First we note that, similarly to Extreme Learning Machine \cite{Huang2006,Huang2011}, when the last block $\ell=L$ is selected, the optimization problem becomes a linear least square problem (LLSQ) with an additional \textit{l-2} regularization term which makes the subproblem in the variables $w_L$ strictly convex. As a consequence, an update $\tilde w_L$  satisfying conditions
\ref{cond1}, \ref{cond2} can be easily found either by solving the subproblem in closed form or by the application of an iterative method such as the Conjugate Gradient algorithm \cite{NoceWrig06}. Note that since the optimization wrt $w_L$ is quadratic strictly convex, the update $\tilde{w}_L^{k+1}=\argmin_{w_L}f(w^k)$ is always bounded to guarantee conditions (\ref{cond1})(\ref{cond2}) (see \cite{Grippof1999}).

When a block different from the last one is selected, $\ell\neq L$, the optimization problem becomes non-convex and finding good updates  $\tilde w_\ell$ may becomes harder. We first note that the point returned by the  Armijo Linesearch along the steepest descent direction
$$\tilde w_{\ell^k}=w_{\ell^k}^k-\alpha^k\nabla_{w_{\ell^k}} f(w^k)$$
satisfies the two conditions (\ref{cond1}) (\ref{cond2}).
However a single iteration of the gradient method for each $\ell^k$ can slow down the overall procedure. More in general, it is easy to prove that a finite number of steps of a gradient method with Armijo Linesearch guarantees conditions (\ref{cond1}) (\ref{cond2}) (see Appendix for more details), so that $\tilde w_{\ell^k}$ can be obtained by iterating a gradient method with an Armijo Linesearch.  However, as suggested in \cite{Grippo2016}, since the number of variables in NNs may be very high, methods which approximate second-order information and do not suffer too much of the \textit{curse of dimensionality} would perform remarkably better than gradient methods. So, at each iteration an optimization method such as a limited memory Quasi-Newton method \cite{NoceWrig06} like LBFGS can be applied to solve the subproblem in the variables $w_{\ell^k}$ to get a trial point $w^*_{\ell^k}$. If conditions \ref{cond1}, \ref{cond2} are satisfied, $\tilde w_{\ell^k}=w^*_{\ell^k}$, otherwise $\tilde w_{\ell^k}$ is set to the point obtained by an  Armijo Linesearch along the steepest descent direction.

% Concerning convergence of the method, by satisfying at each iteration conditions (\ref{cond1}) (\ref{cond2}), the introduced framework can be shown to globally converge to stationary non-maxima points of the optimization problem. Indeed, this framework can be seen as a reformulation of algorithm DAM in \cite{Grippo2016} applied to DFNNs and not to shallow networks where the problem is decomposed by layers and not by single neurons. Furthermore, since our framework satisfies conditions 1-3 in \textit{Proposition 1}  in \cite{Grippo2016}, it globally converges to stationary non-maxima points. A formal proof of convergence of the algorithm is described in Appendix.

\subsection{Implementation and performance of a BLD method}

{For the experimental results, we implemented a  BLD algorithm with the following settings:
\begin{itemize}
\item a \textit{backward cyclic order} selection rule of the layer;
\item choice of $\tilde w_\ell^k$ by the application of a LBFGS method with increasing accuracy $\varepsilon$ and limited number of iterations with a check on conditions (\ref{cond1}) (\ref{cond2}) as described above.
\end{itemize}
}

\begin{algorithm}
    \caption{Backward Block Layer Decomposition (B$^2$LD)}
    \label{alg:BLD}
    \begin{algorithmic}[1]
\State Given $\lbrace x_p,y_p\rbrace_{p=1}^P $
\State Choose $w^0\in\rm I\!R^n$ and set  $k=0$; set $\epsilon>0,\delta\in(0,1)$.
\While {stopping criterion not met}
\For  {$l=L,...,1$}
%\State Find $\tilde{w}_\ell\in \arg\min_{w_\ell}\lbrace f(w_1^k,\dots,w_\ell^k,\dots,w_L^k)\rbrace$ with accuracy $\epsilon$
\State Find $\tilde{w}_\ell$ s.t. $\|\nabla_{w_\ell} f(\tilde{w}_\ell)\|\le \epsilon$
\If {$\tilde{w}_\ell$ satisfies (\ref{cond1}) (\ref{cond2})}
\State $w^{k+1}=(w_1^k,\dots,\tilde{w}_\ell,\dots,w_L^k)$
\Else 
\State $w^{k+1}=(w_1^k,\dots,w_\ell^k-\alpha^k \nabla_{w_\ell} f(w^k),\dots,w_L^k)$
\EndIf
\State $k=k+1$
\EndFor
\State $\epsilon=\epsilon\delta$
\EndWhile
\end{algorithmic}
\end{algorithm}

We observe that we applied an LBFGS method also to solve the LLSQ problem in the variables of the last block $w_L$. This choice is due to the observation in the numerical results that forcing a high accuracy in the optimization of the last layer from the early stages seemed to get worse results in terms of quality of the solution (value of the objective function and test error as well).
 
We compared B$^2$LD performance against an LBFGS method applied  to the whole optimization problem \ref{optimization_problem}. The initial point $w^0$ has been set to the same randomly chosen value for both the two methods and performance are compared on the results obtained within 10 runs.  The regularization parameter $\rho$ was set equal to {$\rho=10^{-3}/n$} and the sigmoid function was used as activation function. No hyperparameters tuning was carried since the focus is on studying optimization algorithms performance more than finding the smaller generalization error. 

The following stopping criteria  were set: i) $\|\nabla f(w^k)\|\le  10^{-3}$; ii)  $\displaystyle f_{tol}=\frac{f(w^k)-f(w^{k+1})}{\max\lbrace f(w^k),1 \rbrace}\le 10^{-4}$; iii) computational time exceeds 150 seconds. Note that for B$^2$LD condition ii) must be satisfied with respect to each block in order to stop computations. Moreover, computational times were checked only every 30 iterations of LBFGS method that is why in Table \ref{tab:results_1x50} Cpu Times can be higher than 150 seconds.

Both the two algorithms were implemented in Python version 3.6, using the package \textit{numpy} for numerical calculus and \textit{scipy} for the optimization algorithms using a Intel(R) Core(TM) i7-870 CPU 2.93 GHz.\\

The two algorithms have been compared over five different network architectures in order to analyze how the algorithms perform when dimensions of the problems increase with respect to two different parameters: width and depth of the network. The characteristic of the different architectures are reported in Table \ref{tab:dataset}. For the sake of simplicity in most cases we assume equal numbers of neurons per layer $N_\ell=N$, so that each network is identified by the notation $[L\times N]$, with the exception of the network $[200,50,200]$, where the values represent the number of neurons per the three layers. Comparison were carried over seven  different datasets publicly available at {\tt  https://sci2s.ugr.es/keel/index.php} and {\tt https://archive.ics.uci.edu/ml/index.php }.
The number of samples in the training and test sets and the number of features per dataset are reported in Table \ref{tab:dataset} along with the number of variables per each optimization problem.

% For the experimental results, 7 medium-size datasets\footnote{All the datasets used for numerical experiments can be found here: https://sci2s.ugr.es/keel/index.php and https://archive.ics.uci.edu/ml/index.php } were taken into considerations and the performance of different algorithms was tested over 5 network structures, from small to deep networks, in order to study how the algorithms scale when dimensions of the problems increase with respect to two different parameters: wideness and deepness of the network. The regularization parameter $\rho$ was set equal to {$\rho=10^{-3}/n$} and the sigmoid function was used as activation function. No hyperparameters tuning was carried since the focus is on studying optimization algorithms performance more than finding the smaller generalization error. The notation $[i\times j]$, is used to describe the structure of the network, where  $i$ represents the number of layers  and $j$ represents the number of neurons per layer, with the exception of the network $[200,50,200]$, where the values represent the number of neurons per layer. Table \ref{tab:dataset} summarizes all the interesting information on the datasets and the number of variables in each different neural network structure.\\

\begin{table}[htbp]
  \centering
      \caption{Datasets and Networks Description}
  \begin{adjustbox}{max width=\textwidth}
    \begin{tabular}{l|ccc|ccccc}
          &       &       &       & \multicolumn{5}{c}{\# Variables in the network} \\
    Dataset & \multicolumn{1}{l}{\# Train} & \multicolumn{1}{l}{\# Test} & \multicolumn{1}{l|}{\# Features} & \multicolumn{1}{l}{[1 x 50]} & \multicolumn{1}{l}{[3 x 20]} & \multicolumn{1}{l}{[200,50,200]} & \multicolumn{1}{l}{[5 x 50]} & \multicolumn{1}{l}{[10 x 50]} \\
    \hline
    Mv    & 32614 & 8154  & 13    & 700   & 1080  & 22800 & 10700 & 23200 \\
    Bikes Sharing & 13903 & 3476  & 59    & 3000  & 2000  & 32000 & 13000 & 25500 \\
    Bejing PM 2.5 & 33406 & 8351  & 48    & 2450  & 1780  & 29800 & 12450 & 24950 \\
    CCPP  & 7654  & 1913  & 5     & 300   & 920   & 21200 & 10300 & 22800 \\
    Ailerons & 11000 & 2750  & 41    & 2100  & 1640  & 28400 & 12100 & 24600 \\
    California & 16512 & 4128  & 9     & 500   & 1000  & 22000 & 10500 & 23000 \\
    Bank  & 36168 & 9042  & 40    & 2050  & 1620  & 28200 & 12050 & 24550 \bigstrut[b]\\
    \hline
    \end{tabular}%

    \end{adjustbox}
  \label{tab:dataset}%
\end{table}%

These numerical experiments aim to show that using BLD may help in obtaining better results in term of the final objective function (regularized training error) without deteriorating the generalization performance. We analyze both the best performance over the 10 runs and the average behaviour.

In Table \ref{tab:results_1x50} the detailed results corresponding to the best run of each of the two algorithms are reported. For each network architecture and for each problem we report: the best solution found, the corresponding norm of the gradient and the time needed to find it. 

Analyzing the numerical results, we observe that BLD method is able to find better values of the objective function with a smaller value of the norm of the gradient. It seems that BLD does not get stuck in poor solutions as it happens to the standard LBFGS method. 
 This issue is particularly evident when the number of layers increases. Indeed,  the results on the deepest network $[10 \times 50 ]$
 show that LBFGS stops very quickly at a very poor solution with a small norm of the gradient whereas BLD is able to continue optimization.
 This may be due also the \textit{vanishing gradient} effect which may affect a full gradient method.

% Table generated by $\times 10^{xcel2LaTeX from sheet 'risultati latex'
\begin{table}[htbp]
  \centering
  \caption{Best result returned over 10 runs by the B$^2$LD and LBFGS methods}
  
   \begin{adjustbox}{max width=\textwidth}
    
   \begin{tabular}{l|rr|rr|rr}

    \multicolumn{1}{p{4.215em}|}{Network } & \multicolumn{2}{c|}{Objective Function} & \multicolumn{2}{c|}{Norm of the Gradient} & \multicolumn{2}{c}{Cpu Time} \bigstrut[b]\\
\cline{2-7}     [1 x 50] & \multicolumn{1}{l}{B$^2$LD} & \multicolumn{1}{l|}{LBFGS} & \multicolumn{1}{l}{B$^2$LD} & \multicolumn{1}{l|}{LBFGS} & \multicolumn{1}{l}{B$^2$LD} & \multicolumn{1}{l}{LBFGS} \bigstrut\\
    \hline
    Ailerons & 2.37$\times 10^{-3}$ & \textbf{2.32$\times 10^{-3}$} & 9.98$\times 10^{-4}$ & \textbf{4.03$\times 10^{-4}$} & \textbf{0.18} & 3.04 \bigstrut[t]\\
    Bank Marketing & 7.92$\times 10^{-5}$ & \textbf{3.67$\times 10^{-5}$} & \textbf{4.96$\times 10^{-4}$} & 6.06$\times 10^{-4}$ & \textbf{13.91} & 29.77 \\
    Bejing Pm25 & \textbf{4.79$\times 10^{-3}$} & 5.32$\times 10^{-3}$ & \textbf{8.57$\times 10^{-4}$} & 2.25$\times 10^{-3}$ & 9.16  & \textbf{8.43} \\
    Bikes Sharing & \textbf{2.04$\times 10^{-3}$} & 3.01$\times 10^{-3}$ & \textbf{7.12$\times 10^{-4}$} & 4.10$\times 10^{-3}$ & 16.44 & \textbf{11.58} \\
    California & 2.00$\times 10^{-2}$ & 2.00$\times 10^{-2}$ & \textbf{2.98$\times 10^{-4}$} & 3.34$\times 10^{-3}$ & \textbf{1.50} & 2.99 \\
    CCPP  & \textbf{3.35$\times 10^{-3}$} & 3.53$\times 10^{-3}$ & \textbf{5.88$\times 10^{-4}$} & 6.87$\times 10^{-4}$ & \textbf{0.08} & 0.91 \\
    Mv    & 2.93$\times 10^{-4}$ & \textbf{2.68$\times 10^{-4}$} & 7.18$\times 10^{-4}$ & \textbf{4.90$\times 10^{-4}$} & \textbf{20.90} & 23.78 \bigstrut[b]\\
\hline\hline
    %\multicolumn{1}{r}{} &       & \multicolumn{1}{r}{} &       & \multicolumn{1}{r}{} &       &  \bigstrut[t]\\
     [3 x 20] & \multicolumn{1}{l}{B$^2$LD} & \multicolumn{1}{l|}{LBFGS} & \multicolumn{1}{l}{B$^2$LD} & \multicolumn{1}{l|}{LBFGS} & \multicolumn{1}{l}{B$^2$LD} & \multicolumn{1}{l}{LBFGS} \bigstrut[b]\\
    \hline
    Ailerons & 2.33$\times 10^{-3}$ & \textbf{2.24$\times 10^{-3}$} & 9.29$\times 10^{-4}$ & \textbf{4.88$\times 10^{-4}$} & \textbf{3.50} & 3.62 \bigstrut[t]\\
    Bank Marketing & 1.09$\times 10^{-4}$ & \textbf{3.91$\times 10^{-5}$} & 8.09$\times 10^{-4}$ & \textbf{7.66$\times 10^{-4}$} & 59.19 & \textbf{17.45} \\
    Bejing Pm25 & \textbf{4.45$\times 10^{-3}$} & 7.60$\times 10^{-3}$ & \textbf{9.48$\times 10^{-4}$} & 1.36$\times 10^{-3}$ & 6.24  & \textbf{6.04} \\
    Bikes Sharing & \textbf{1.88$\times 10^{-3}$} & 2.81$\times 10^{-3}$ & \textbf{8.37$\times 10^{-4}$} & 2.36$\times 10^{-3}$ & \textbf{10.76} & 16.54 \\
    California & \textbf{2.02$\times 10^{-2}$} & 2.25$\times 10^{-2}$ & \textbf{4.10$\times 10^{-4}$} & 2.28$\times 10^{-3}$ & 2.44  & \textbf{1.56} \\
    CCPP  & 3.47$\times 10^{-3}$ & \textbf{3.35$\times 10^{-3}$} & 3.89$\times 10^{-4}$ & \textbf{3.38$\times 10^{-4}$} & 1.51  & \textbf{1.17} \\
    Mv    & 2.92$\times 10^{-4}$ & \textbf{2.17$\times 10^{-4}$} & 7.71$\times 10^{-4}$ & \textbf{3.70$\times 10^{-4}$} & \textbf{23.86} & 37.44 \bigstrut[b]\\
\hline\hline
    %\multicolumn{1}{r}{} &       & \multicolumn{1}{r}{} &       & \multicolumn{1}{r}{} &       &  \bigstrut[t]\\
     [200.50.200] & \multicolumn{1}{l}{B$^2$LD} & \multicolumn{1}{l|}{LBFGS} & \multicolumn{1}{l}{B$^2$LD} & \multicolumn{1}{l|}{LBFGS} & \multicolumn{1}{l}{B$^2$LD} & \multicolumn{1}{l}{LBFGS} \bigstrut[b]\\
    \hline
    Ailerons & \textbf{2.33$\times 10^{-3}$} & 2.56$\times 10^{-3}$ & \textbf{9.09$\times 10^{-4}$} & 1.58$\times 10^{-3}$ & \textbf{17.93} & 36.77 \bigstrut[t]\\
    Bank Marketing & \textbf{1.93$\times 10^{-4}$} & 2.31$\times 10^{-4}$ & \textbf{1.61$\times 10^{-3}$} & 4.98$\times 10^{-3}$ & 184.34 & \textbf{151.09} \\
    Bejing Pm25 & \textbf{4.59$\times 10^{-3}$} & 5.76$\times 10^{-3}$ & \textbf{8.44$\times 10^{-4}$} & 2.59$\times 10^{-3}$ & 137.09 & \textbf{113.77} \\
    Bikes Sharing & \textbf{2.01$\times 10^{-3}$} & 4.00$\times 10^{-3}$ & \textbf{5.81$\times 10^{-4}$} & 3.12$\times 10^{-3}$ & 146.89 & \textbf{132.06} \\
    California & \textbf{1.98$\times 10^{-2}$} & 2.31$\times 10^{-2}$ & 1.14$\times 10^{-2}$ & \textbf{5.46$\times 10^{-3}$} & 17.37 & \textbf{15.41} \\
    CCPP  & \textbf{3.50$\times 10^{-3}$} & 3.70$\times 10^{-3}$ & \textbf{7.04$\times 10^{-4}$} & 8.17$\times 10^{-4}$ & \textbf{6.67} & 16.39 \\
    Mv    & \textbf{2.49$\times 10^{-4}$} & 9.73$\times 10^{-4}$ & \textbf{7.84$\times 10^{-4}$} & 5.59$\times 10^{-3}$ & \textbf{145.32} & 185.38 \bigstrut[b]\\
\hline\hline
    %\multicolumn{1}{r}{} &       & \multicolumn{1}{r}{} &       & \multicolumn{1}{r}{} &       &  \bigstrut[t]\\
     [5 x 50] & \multicolumn{1}{l}{B$^2$LD} & \multicolumn{1}{l|}{LBFGS} & \multicolumn{1}{l}{B$^2$LD} & \multicolumn{1}{l|}{LBFGS} & \multicolumn{1}{l}{B$^2$LD} & \multicolumn{1}{l}{LBFGS} \bigstrut[b]\\
    \hline
    Ailerons & \textbf{2.37$\times 10^{-3}$} & 4.89$\times 10^{-3}$ & \textbf{8.15$\times 10^{-4}$} & 5.49$\times 10^{-3}$ & 12.57 & \textbf{10.52} \bigstrut[t]\\
    Bank Marketing & 2.02$\times 10^{-4}$ & \textbf{9.19$\times 10^{-5}$} & \textbf{9.41$\times 10^{-4}$} & 1.43$\times 10^{-3}$ & \textbf{110.51} & 152.88 \\
    Bejing Pm25 & \textbf{5.03$\times 10^{-3}$} & 7.26$\times 10^{-3}$ & \textbf{5.38$\times 10^{-4}$} & 1.34$\times 10^{-3}$ & 42.72 & \textbf{27.26} \\
    Bikes Sharing & \textbf{2.07$\times 10^{-3}$} & 3.46$\times 10^{-2}$ & \textbf{5.70$\times 10^{-4}$} & 5.06$\times 10^{-3}$ & 74.30 & \textbf{1.81} \\
    California & \textbf{2.05$\times 10^{-2}$} & 5.47$\times 10^{-2}$ & 2.92$\times 10^{-3}$ & \textbf{1.57$\times 10^{-3}$} & 12.56 & \textbf{1.46} \\
    CCPP  & \textbf{3.79$\times 10^{-3}$} & 4.45$\times 10^{-3}$ & \textbf{4.54$\times 10^{-4}$} & 1.54$\times 10^{-3}$ & \textbf{7.33} & 8.61 \\
    Mv    & \textbf{3.59$\times 10^{-4}$} & 7.76$\times 10^{-4}$ & \textbf{6.12$\times 10^{-4}$} & 8.38$\times 10^{-3}$ & \textbf{131.44} & 150.55 \bigstrut[b]\\
\hline\hline
    %\multicolumn{1}{r}{} &       & \multicolumn{1}{r}{} &       & \multicolumn{1}{r}{} &       &  \bigstrut[t]\\
     [10 x 50] & \multicolumn{1}{l}{B$^2$LD} & \multicolumn{1}{l|}{LBFGS} & \multicolumn{1}{l}{B$^2$LD} & \multicolumn{1}{l|}{LBFGS} & \multicolumn{1}{l}{B$^2$LD} & \multicolumn{1}{l}{LBFGS} \bigstrut[b]\\
    \hline
    Ailerons & \textbf{2.65$\times 10^{-3}$} & 1.31$\times 10^{-2}$ & 2.92$\times 10^{-2}$ & \textbf{4.11$\times 10^{-5}$} & 68.38 & \textbf{1.95} \bigstrut[t]\\
    Bank Marketing & \textbf{2.10$\times 10^{-3}$} & 6.28$\times 10^{-2}$ & 1.94$\times 10^{-01}$ & \textbf{7.93$\times 10^{-5}$} & 188.25 & \textbf{6.77} \\
    Bejing Pm25 & \textbf{8.59$\times 10^{-3}$} & 8.74$\times 10^{-3}$ & 8.18$\times 10^{-4}$ & \textbf{1.77$\times 10^{-5}$} & \textbf{0.21} & 5.08 \\
    Bikes Sharing & \textbf{2.68$\times 10^{-3}$} & 3.51$\times 10^{-2}$ & 3.34$\times 10^{-2}$ & \textbf{6.89$\times 10^{-5}$} & 158.46 & \textbf{2.65} \\
    California & \textbf{1.88$\times 10^{-2}$} & 5.50$\times 10^{-2}$ & 1.31$\times 10^{-01}$ & \textbf{1.35$\times 10^{-4}$} & 75.67 & \textbf{2.65} \\
    CCPP  & \textbf{4.89$\times 10^{-3}$} & 5.16$\times 10^{-2}$ & 1.61$\times 10^{-2}$ & \textbf{1.12$\times 10^{-4}$} & 17.05 & \textbf{1.16} \\
    Mv    & \textbf{3.20$\times 10^{-3}$} & 5.53$\times 10^{-2}$ & 1.76$\times 10^{-01}$ & \textbf{1.16$\times 10^{-4}$} & 167.36 & \textbf{4.85} \bigstrut[b]\\
    \hline
     \end{tabular}%

\end{adjustbox}

\label{tab:results_1x50}

\end{table}%

In order to a get a quick look at the results we also produce histograms in which we analyze the performance over each problem by changing the architectures. In this way it is possible to visualize how the performance over the same problem changes with the architecture of the network. B$^2$LD's performance seems to be not affected by the number of layers, whereas for deeper network LBFGS seems to be trapped into "bad" solutions.

\begin{figure}[tbp]
     \begin{center}
        \subfigure[Ailerons]{%
            \label{fig:first}
            \includegraphics[width=0.4\textwidth]{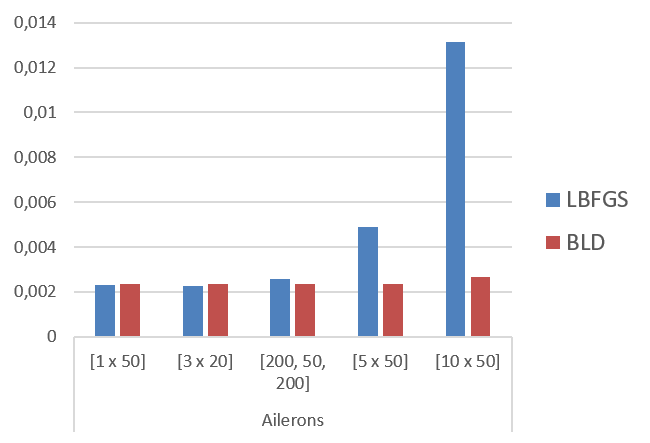}
        }%
        \subfigure[Bank Marketing ]{%
           \label{fig:second}
           \includegraphics[width=0.4\textwidth]{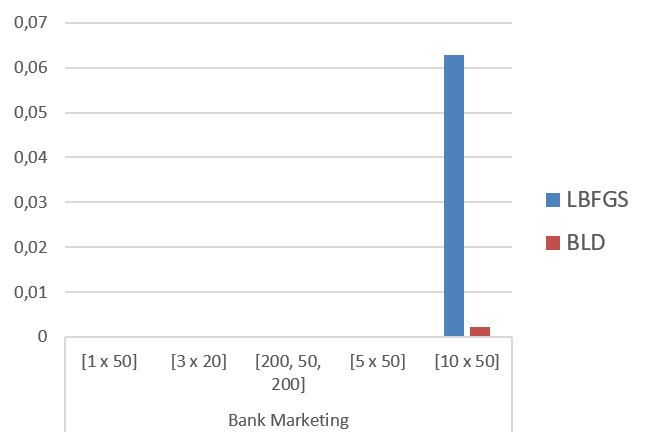}
        }\\ %  ------- End of the first row ----------------------%
        \subfigure[Bejing PM 2.5]{%
            \label{fig:third1}
            \includegraphics[width=0.4\textwidth]{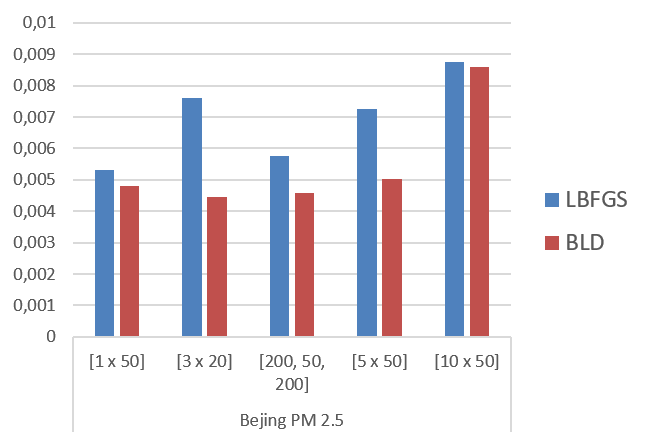}
        }%
        \subfigure[ Bikes Sharing]{%
            \label{fig:fourth1}
             \includegraphics[width=0.4\textwidth]{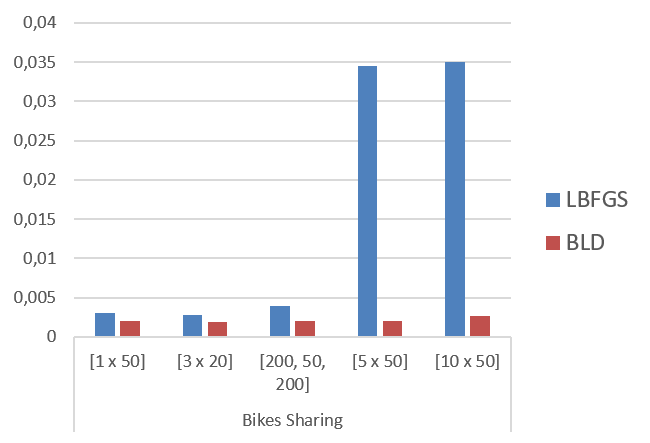}
        }\\% ------- End of the second row ----------------------%
        
        \subfigure[ California ]{%
            \label{fig:third}
            \includegraphics[width=0.4\textwidth]{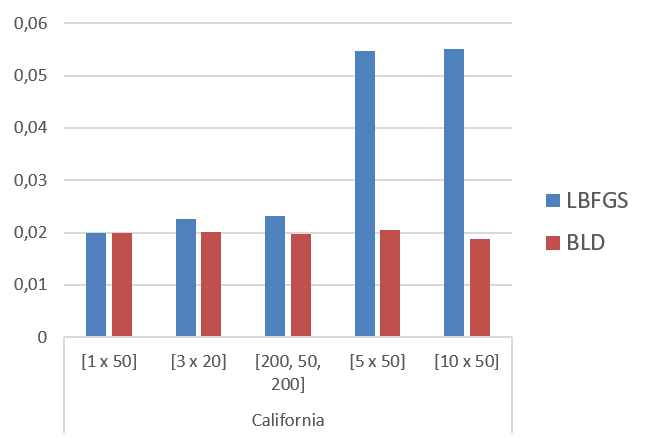}
        }%
        \subfigure[CCPP ]{%
            \label{fig:fourth2}
            \includegraphics[width=0.4\textwidth]{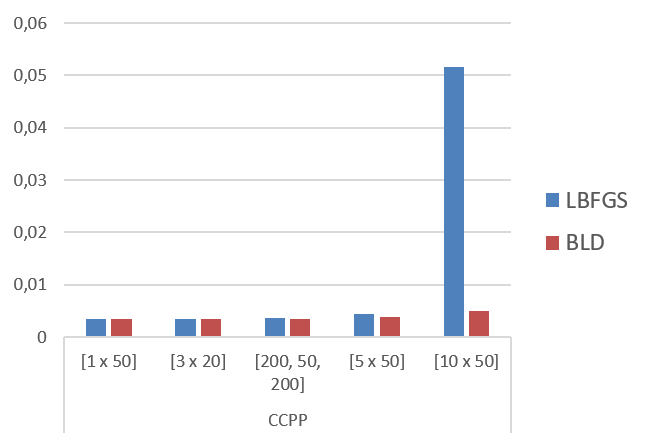}
        }\\% ------- End of the third row ----------------------%
        \subfigure[Mv]{%
            \label{fig:fourth}
            \includegraphics[width=0.4\textwidth]{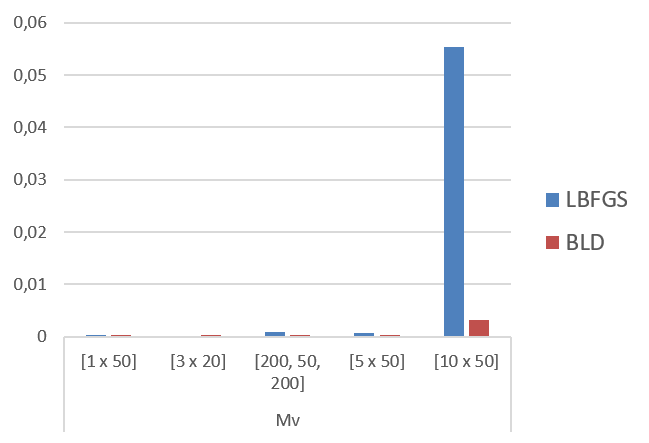}
        }
    \end{center}
    \caption{%
        Best Objective function value returned by the BLD  and the  LBFGS methods.
     }%
   \label{fig:subfigures}
\end{figure}

The average behaviour of B$^2$LD is assessed by counting the number of win, \#wins, (B$^2$LD beats LBFGS) and defeats, \#defs, (LBFGS beats B$^2$LD) over the 10 random runs. We say that a method beats the other if the returned value is better than 5\% compared with the other; otherwise we declare a tie. The cumulative results are reported in Table \ref{tab:win_lose_train} and \ref{tab:win_lose_test} respectively on the objective function and the test error value. 
For each problem and for each network architecture we report into parenthesis [\#wins ; \#defs] of B$^2$LD w.r.t. LBFG. From this analysis it is evident that B$^2$LD remarkably outperforms LBFGS particularly when the dimension of the problem increases; indeed it obtains better solutions (smaller objective function, Table \ref{tab:win_lose_train}) with better generalization properties (smaller test error, Table \ref{tab:win_lose_test}).

\begin{table}[htbp ]
  \centering
  \caption{[\#wins ; \#defs] of B$^2$LD versus LBFGS on the objective function values (10 runs)}
  \begin{adjustbox}{max width=\textwidth}
    \begin{tabular}{l|lcccc}
%          & \multicolumn{5}{c}{Training [\#win ;  \#loose]} \\
          & [1 x 50 ] & [3 x 20 ] & [200, 50, 200 ] & [5 x 50 ] & [10 x 50 ] \bigstrut[b]\\
    \hline
    Ailerons & [\, 0 ; 0 ] & [\, 6 ; 2 ] & [\, 8 ; 0 ] & [10 ; 0 ] & [\, 9 ; 0  ] \bigstrut[t ]\\
    Bank Marketing & [0 ; 10 ] & [\, 5 ; 5 ] & [\, 4 ; 5 ] & [\, 8 ; 2 ] & [10 ; 0 ] \\
    Bejing Pm25 & [\, 5 ; 0 ] & [10 ; 0 ] & [10 ; 0 ] & [10 ; 0 ] & [\, 0 ; 0 ] \\
    Bikes Sharing & [10 ; 0 ] & [10 ; 0 ] & [10 ; 0 ] & [10 ; 0 ] & [10 ; 0 ] \\
    California & [\, 8 ; 0 ] & [10 ; 0 ] & [10 ; 0 ] & [10 ; 0 ] & [10 ; 0 ] \\
    CCPP  & [\, 2 ; 0 ] & [\, 2 ; 6 ] & [\, 8 ; 0 ] & [10 ; 0 ] & [10 ; 0 ] \\
    Mv    & [\, 3 ; 5 ] & [\, 5 ; 3 ] & [10 ; 0 ] & [10 ; 0 ] & [10 ; 0 ] \bigstrut[b]\\
    \hline
    \end{tabular}%
    \end{adjustbox}
  \label{tab:win_lose_train}%
  
\end{table}%

% Table generated by Excel2LaTeX from sheet 'Foglio3'
\begin{table}[htbp ]
  \centering
  \caption{[\#wins ; \#defs] of B$^2$LD versus LBFGS 
on the test error values (10 runs)}
  \begin{adjustbox}{max width=\textwidth}
    \begin{tabular}{l|ccccc}
  %        & \multicolumn{5}{c}{Test [\#win ;  \#loose ]} \\
          & [1 x 50 ] & [3 x 20 ] & [200, 50, 200 ] & [5 x 50 ] & [10 x 50 ] \bigstrut[b]\\
    \hline
    Ailerons & [\, 0 ; 0 ] & [\, 6 ; 4 ] & [\, 7 ; 0 ] & [10 ; 0 ] & [\, 9 ; 0 ] \bigstrut[t ]\\
    Bank Marketing & [\, 0 ; 9 ] & [\, 5 ; 5 ] & [\, 2 ; 8 ] & [\, 8 ; 2 ] & [10 ; 0 ] \\
    Bejing Pm25 & [\, 5 ; 0 ] & [10 ; 0 ] & [10 ; 0 ] & [10 ; 0 ] & [\, 0 ; 0 ] \\
    Bikes Sharing & [10 ; 0 ] & [10 ; 0 ] & [10 ; 0 ] & [10 ; 0 ] & [10 ; 0 ] \\
    California & [\, 8 ; 0 ] & [\, 9 ; 0 ] & [10 ; 0 ] & [10 ; 0 ] & [10 ; 0 ] \\
    CCPP  & [\, 0 ; 0 ] & [\, 1 ; 8 ] & [\, 8 ; 0 ] & [\, 9 ; 1 ] & [10 ; 0 ] \\
    Mv    & [\, 3 ; 6 ] & [\, 4 ; 6 ] & [10 ; 0 ] & [\, 8 ; 1 ] & [10 ; 0 ] \bigstrut[b]\\
    \hline
    \end{tabular}%
    \end{adjustbox}
    
  \label{tab:win_lose_test}%
\end{table}%

% Finally, we analyze how many times each layer is updated when using BLD. Indeed, while  LBFGS method updates the weights of all the layers at the same time considering a very large optimization problem, BLD considers layers one by one and it is consequently able
% to better understand how much each layer have to be optimized. If during the optimization of a layer, the gradient with respect to that layer is too small or the relative reduction of the objective function $f_{tol}$ is below a certain threshold, BLD skips that layer and optimizes the following one. In this way, the algorithm is able not to waste time in optimizing parts of the network not really influent. In Table \ref{tab:strati} the number of updates performed by BLD on each layer with respect to the deepest network, [10 x 50], is presented. By analyzing how often each layer is updated, it turned out that in deep networks first and last layers are optimized much often than layers in the middle. This behaviour may be due to the fact that the network might be too deep for the problem in consideration and a smaller number of layers was necessary.  Note how in the Bejing PM 2.5 dataset, BLD  performs only the optimization of the last layer and then stops, miming the Extreme Learning procedure and finding a better solution than LBFGS (see Table \ref{tab:results_10x50}).

Finally, we analyzed how many times each layer is updated when using B$^2$LD. Indeed, while  LBFGS method updates the weights of all the layers at the same time, B$^2$LD considers layers one by one and it is possible that some layer is updated less often than others.  Indeed, whenever the gradient with respect to a layer and/or the relative reduction of the objective function are below a given tolerance, B$^2$LD skips the update of the layer. In this way, the algorithm does not waste time in optimizing parts of the network not worthwhile. We consider the deepest network, [10 x 50],  and we count the number of updates performed by the best BLD on each layer, which are reported in Table \ref{tab:strati}.
  It turns out that the first and last layers are optimized much often than layers in the middle of the network. This highlight the fact that the network might be too deep for the problems and a smaller number of hidden layers may be enough.  We also note that on the dataset Bejing PM 2.5, B$^2$LD  performs only the optimization of the last layer, miming the behaviour of Extreme Learning Machine and it finds a better solution than LBFGS (see Table \ref{tab:results_1x50}).

% Tabella degli strati
\begin{table}[htbp]
  \centering
  \caption{Number of updates per layer $\ell=0,\dots, 10$ performed by B$^2$LD  during the best run  for the network [10 x 50].}
  \begin{adjustbox}{max width=\textwidth}
    \begin{tabular}{l|ccccccccccc}
          & \multicolumn{11}{c}{Layer of the network} \\
    Dataset  & 0 & 1     & 2     & 3     & 4     & 5     & 6     & 7     & 8     & 9     & 10 \bigstrut[b]\\
    \hline
    Mv    & 30    & 30    & 48    & 8     & 2     & 2     & 2     & 2     & 2     & 2     & 14 \bigstrut[t]\\
    Bikes Sharing & 120   & 62    & 35    & 6     & 62    & 5     & 4     & 4     & 4     & 4     & 18 \\
    Bejing PM 2.5 & 0     & 0     & 0     & 0     & 0     & 0     & 0     & 0     & 0     & 0     & 5 \\
    CCPP  & 11    & 2     & 2     & 31    & 2     & 2     & 2     & 2     & 2     & 2     & 4 \\
    Ailerons & 33    & 16    & 24    & 63    & 4     & 7     & 4     & 4     & 4     & 4     & 21 \\
    California & 28    & 28    & 39    & 11    & 2     & 2     & 2     & 2     & 2     & 2     & 13 \\
    Bank Marketing & 30    & 30    & 30    & 30    & 1     & 6     & 2     & 30    & 1     & 1     & 5 \bigstrut[b]\\
    \hline
    \end{tabular}%
    \end{adjustbox}
    
  \label{tab:strati}%
\end{table}%

\section{{Minibatch} Block Layer Decomposition algorithm}\label{Minibatch BCD}

So far, we have seen that the BLD method can improve the performance of standard optimization algorithms.
In this section, we exploited the addictive structure of the objective function and we embed the layer decomposition scheme proposed above into a \textit{minibatch} strategy with the aim of enhancing the performance of both online methods and batch decomposition methods.
A similar idea has been proposed in \cite{bravi2014incremental} where a two-block incremental decomposition method has been proposed which is suitable for block layer decomposition of a shallow network.

\subsection{Minibatch Block Decomposition Method}
Let ${\cal B}_h\subset \{1,\dots, P\}$ be the index set identifying a minibatch, i.e. a subset of the samples. Given a partition ${\cal B}=\lbrace {\cal B}_1,\dots,{\cal B}_H\rbrace$  of the set $\{1,\dots, P\}$, problem \eqref{optimization_problem} can be expressed as
$$\min_w \sum_{h=1}^H \sum_{p\in {\cal B}_h} E_p(w_1,\dots, w_L) +\rho \|w\|^2$$
}

The \textit{minibatch}  BCD algorithm applies a double decomposition to the problem, meaning that at each iteration only information on the function involving a minibatch ${\cal B}_h$ is used to update a subset of variables, the \textit{working set}, $ {\cal J}^k$.

%\remove[LP]{a decomposition with respect to the samples $P$ and a decomposition with respect to the variables $w\in \rm I\!R^n$. Exploiting the layered structure of deep networks, at each iteration a minibatch, ${\cal B}_h$ of the dataset is extracted and a \textit{working set} ${\cal J}^k\subseteq \lbrace1,...,L \rbrace$ is considered. For the sake of simplicity, we consider w assume that ${\cal J}^k=\ell_k$} 

Let us introduce the following notation
$$f_h=\sum_{p\in {\cal B}_h} E_p(w_1,\dots, w_L) +|{\cal B}_h|\frac{\rho}{P} \|w\|^2$$ so that 
$\displaystyle f(w)=\sum_{h=1}^H f_h$ and the gradient can be expressed as 
{$$\nabla f(w)=
\sum_{h=1}^H  \sum_{\ell=1}^L e_\ell \otimes \nabla_{w_\ell} f_h
$$
where $\otimes$ represents the standard Kronecker Product and $e_\ell\in \R^L$ is  the $\ell$-th column of the identity matrix of dimension $L$.  
}

% \remove[LP]{The minibatch ${\cal B}_h^k$ is used to sequentially update blocks $\ell\in {\cal J}^k$ and then a new \textit{minibatch} ${\cal B}_h^{k+1}$and a new working set ${\cal J}_h^{k+1}$ are extracted.}

A very general framework of a minibatch block layer decomposition is reported in Algorithm \ref{alg:minibatch_BCD}.
% \todo[]{il passo 4 non è facile: prima era fuori dal cilco while, ma questo non consente di farci rientrare la scelta random. Se però è dentro, dobbiamo forse mettere un altro ciclo for ?}

\begin{algorithm}
    \caption{{Minibatch} Block Layer Decomposition}
    \label{alg:minibatch_BCD}
    \begin{algorithmic}[1]
\State Given $\lbrace x_p,y_p\rbrace_{p=1}^P 
%, {\cal L}=\lbrace1,...,L\rbrace 
$
\State Choose $w^0\in\rm I\!R^n$ and set  $k=0$; 
\While {(stopping criterion not met)}
%\State \red{Define a set of samples ${\cal P}\subseteq \{1,\dots, P\}$}
\State Define a partition ${\cal B}^k=\lbrace {\cal B}_1,\dots,{\cal B}_H\rbrace$ of $\{1,\dots, P\}$
%\red{ of ${\cal P}$}
%\For {$h\in\{1,\dots,H\}$}
\State Select a minibatch ${\cal B}_h $, $h\in\{1,\dots,H\}$
\State Choose  ${\cal J}_h^k\subseteq \{1,\dots, L\}$
\State Set $\tilde w^0 =w^k$; $j=0$;
\For {$\ell\in{\cal J}_h^k$}
\State Set $d_\ell^j=-\nabla_{w_\ell} f_h(\tilde w^j)$
\State Update $\widetilde w_\ell^{j+1}=\tilde w_\ell^j+\alpha^kd_\ell^j$ 
\State $j=j+1$
\EndFor
\State $w^{k+1}=\tilde w^{|{\cal J}_h^k|}$
\State Update $\alpha^k$ 
\State $k=k+1$
%\EndFor
\EndWhile
\end{algorithmic}
\end{algorithm}

% \begin{algorithm}
%     \caption{\textit{Minibatch} BCD in NNs}
%     \label{alg:minibatch_BCD}
%     \begin{algorithmic}[1]
% \State Given $\lbrace X_p,y_p\rbrace_{p=1}^P , {\cal L}=\lbrace1,...,L\rbrace 
% $
% \State Define $k=0, w^0$
% \State Define a partition ${\cal B}=\lbrace {\cal B}_1,\dots,{\cal B}_H\rbrace$ of $\{1,\dots,P\}$
% \While {stopping criterion not met}
% \State Choose a minibatch ${\cal B}_h$
% \State Define an index set  $ {\cal J}_h^k\subseteq {\cal L}$
% \For {$\ell\in{\cal J}_h^k$}
% \State Determine a direction $d_\ell^k$ with the minibatch ${\cal B}_h$
% \State Update $w_\ell^{k+1}$ \red{$=w_\ell^k-\alpha^kd_\ell^k$ dopo introduco regole di aggiornamento diverse (momentum)}
% \State $k=k+1$
% \EndFor
% \State Update $\alpha^k$
% \EndWhile
% \end{algorithmic}
% \end{algorithm}
Algorithm \ref{alg:minibatch_BCD} can be differently characterized depending on the definition of the partition ${\cal B}^k$, the selection rule of the minibatch ${\cal B}_h$, the choice of the working set ${\cal J}_h^k$, and the updating rule of the stepsize $\alpha^k$.
We address possible interesting choices in the following paragraphs. 
\subsection*{Selection of the minibatch ${\cal B}_h$}
Selection of the minibatch over the $P$ samples can be implemented following classical rules used in \textit{online} methods for Machine Learning.
In particular we can consider
%\remove[LP]{After having partitioned the starting dataset in minibatches ${\cal B}_h$, different approaches can be implemented to determine which minibatch consider at each iteration. Taking steps from classical minibatch algorithms we could introduce:}
\begin{itemize}
\item \textbf{Incremental Rule}: the order in which minibatches will be used is fixed \textit{a priori} and kept unchanged over the iterations;

\item \textbf{Stochastic Rule}: at each iteration a minibatch is chosen randomly from the available list $\cal B$;

\item \textbf{Random without replacement rule}: at each iteration a minibatch is chosen randomly in $\cal B$ without replacement; this corresponds to an incremental rule when the selection order is reshuffled;

\end{itemize}

\subsection*{Selection of the \textit{working set} ${\cal J}_h^k$}
The index set ${\cal J}_h^k$ defines which blocks will be sequentially updated at iteration $k$ using the minibatch ${\cal B}_h$. ${\cal J}_h^k$ can be composed by all the layers, ${\cal J}_h^k=\lbrace 1,...,L\rbrace$, or can be a subset of the index set  ${\cal L}$. In the first case, all the blocks are updated sequentially using the same minibatch, while, in the second case, only a subset of the layers are updated with a given minibatch. Furthermore, ${\cal J}_h^k$ can be defined at each iteration, or it can be fixed \textit{a priori}, ${\cal J}_h^k={\cal J}$ $\forall k$ .

% \remove{ The only condition we imposed is:
% %\begin{assumption}[Minibatch most violating condition] \label{cyclic_cond2}}
% Let $\ell^\star(k)\in {\cal L}, {\cal B}_h\in {\cal B}$ be the index and the minibatch at iteration $k$ for which we have
% %\begin{equation}\label{eq:gauss_south1}
% %\Vert \nabla_{\ell^\star(k)} f_h(w^k)\Vert\geq \Vert\nabla_\ell f_h(w^k)\Vert \qquad l=\lbrace 1,...,L\rbrace h=\lbrace1,...,P \rbrace
% %\end{equation}
% %Then exists a finite index $M\geq 0$ such that %$\ell^\star(k)\in {\cal J}^{k+M}_h$
% % \end{assumption}
% }
In the definition of the set ${\cal J}^k$, a computationally efficient choice, that exploits the backpropagation rule for updating the gradient, consists in setting set ${\cal J}^k=\{1,\dots,L\}$. Indeed, when updating a block of variables $w_{\ell^k}$ with a minibatch ${\cal B}_h$ in each layer $\ell$  we can store the output $z_{\ell}^i$ of each neuron $i$  so that if the same minibatch is used to update another block of variables $w_{\ell^{k+1}}$ we do not need to evaluate the output $z_\ell$ of the previous layers $\ell<\ell^{k+1}$, being these unchanged.

\subsection{Implementation and performance of a {minibatch} BLD method}
For the experimental results, we implemented a \textit{minibatch} BCD algorithm with the following settings:
\begin{itemize}
\item the order in which minibatches ${\cal B}_h$ are used is fixed \textit{a priori};
\item  at each iteration the set ${\cal J}^k_h$ is composed by all the layers which are updated sequentially following a \textit{backward} order as in the B$^2$LD scheme presented above;
\item $\alpha^k$ updated according to the diminishing stepsize $$\alpha^k(1-\epsilon\alpha^k)$$
\end{itemize}
At each iteration, the algorithm picks a minibatch ${\cal B}_h$ and updates the layers of the network sequentially in backward order by performing a steepest descent iteration. After having updated all the layers, a new minibatch is considered and the same procedure is applied until a certain stopping criterion, such as a maximum number of iterations or the norm of the gradient smaller than a certain threshold, is met.
We call this scheme Block Layer Incremental Gradient (BLInG) which is reported in Algorithm \ref{alg:minibatch_Bling}.

\begin{algorithm}
    \caption{Block Layer Incremental Gradient (BLInG)}
    \label{alg:minibatch_Bling}
    \begin{algorithmic}[1]
\State Given $\lbrace x_p,y_p\rbrace_{p=1}^P , {\cal L}=\lbrace1,...,L\rbrace 
$
\State Choose $w^0\in\rm I\!R^n$ and set  $k=0,\alpha^0\geq 0,\epsilon\in(0;1),\beta>0, \gamma<\infty,$; 
\State Define a partition ${\cal B}=\lbrace {\cal B}_1,\dots,{\cal B}_H\rbrace$ of $\{1,\dots,P\}$
\While {(stopping criterion not met)}
\For {h=1,...,H}
\State Set $\tilde w^0 =w^k$; $j=0$;
\For {$\ell=1,...,L$}
\State Set $d_\ell^j=-\nabla_{w_\ell} f_h(\tilde w^j)$
\State Update $\tilde w_\ell^{j+1}=\tilde w_\ell^j+\frac{\alpha^k}{\max\lbrace\beta,min\lbrace\gamma,\Vert d_\ell^k\Vert\rbrace\rbrace} d_\ell^j$ 
\State $j=j+1$
\EndFor
\State $w^{k+1}=\tilde w^{|{\cal J}_h^k|}$
\State $\alpha^{k+1}=\alpha^k(1-\epsilon\alpha^k)$
\State $k=k+1$
\EndFor
\EndWhile
\end{algorithmic}
\end{algorithm}

%\red{
Convergence of the BLInG above can be proved under suitable assumptions following \cite{Bertsekas2000} by looking at the iteration generated by BLInG as a gradient method with error. A similar approach has been followed in \cite{bravi2014incremental} for a two-block decomposition where the last block is optimized with a full batch strategy.

Following \cite{Yu2017a}, in the updating formula of $w_\ell^k$ we have scaled the stepsize with a normalization term, properly bounded to avoid overflow, since this choice seemed to make the updating process more robust by avoiding vanishing and exploding gradient. 

As regards the updating rule for the stepsize $\alpha^k$, convergence requires to use a \textit{diminishing} stepsize rule.
The proposed rule at Step 13 of the BLInG algorithm satisfies the assumptions as well as another usual updating rule $\alpha^{k+1}=\frac{1}{k}\alpha^k$.
Concerning the parameters in the updating rule of the stepsize, $\gamma$ and $\beta$ were fixed to $10^{-3}$ and $10^6$ respectively. 

% Concerning the updating rule of the stepsize $\alpha$, this parameter can be updated according to the usual \textit{diminishing} stepsize conditions:
% \begin{equation} \label{eq:1}
% \sum_{k=1}^{\infty} \alpha^k=\infty \qquad \sum_{k=1}^{\infty} \big(\alpha^k\big)^2< \infty
% \end{equation}
% This conditions require that the stepsize goes to zero but not too fast so that the algorithm has the possibility to move "enough" from the starting point. Updating rules which satisfy these two conditions are:
% \begin{itemize}
% \item $\alpha^{k+1}=\frac{1}{k}\alpha^k$
% \item $\alpha^{k+1}=\alpha^k(1-\epsilon\alpha^k)$
% \end{itemize}

We compared BLInG with the Incremental Gradient (IG), which is its non-decomposed counterpart.
We use for both the two algorithms the same setting and normalization for the stepsize $\alpha^k$.
The initial values of the learning rate $\alpha^0$ and the fraction reduction $\epsilon$ were chosen through a grid-search procedure which led to different values for the two methods which depend on the network architecture: 
$$\alpha^0_{IG}=0.5 \qquad \alpha^0_{BLInG}=\frac{0.5}{\max \lbrace 1, L-2\rbrace} \qquad \epsilon=5 \times 10^{-3}$$
IG performance was invariant with respect to the initial value of the learning rate which best value was the same for all the architectures of the network; BLInG instead performs better with a  smaller initial learning rate for deeper networks.
% (note that the updating rule is different only for the deep networks $[5 \times 50]$ and $[10\times 50]$).

Datasets and architectures of the networks are the same of those used for the BLD algorithm and reported in Table \ref{tab:dataset}. Each algorithm was tested over 10 runs starting from randomly chosen initial points.

Since \textit{minibatch} methods evaluate neither the objective function nor the gradient at each iteration, the stopping criterion 
% and are usually faster in finding good solutions for large scale optimization problems, the only stopping criterion for these methods 
has been a limit on the computational time that was fixed to 60 seconds.
% so that still some differences can be seen between the behaviour of the two algorithms.

\begin{table}[htbp]
  \centering  
    \caption{
    Best results returned over 10 runs by the BLInG and IG}
  \begin{adjustbox}{max width=\textwidth}
    \begin{tabular}{l|rr|rr}
    \multicolumn{1}{p{4.215em}|}{Network } & \multicolumn{2}{c|}{Objective Value} & \multicolumn{2}{c}{Test Error} \bigstrut[b]\\
\cline{2-5}     [1 x 50] & \multicolumn{1}{c}{BLInG} & \multicolumn{1}{c|}{IG} & \multicolumn{1}{c}{BLInG} & \multicolumn{1}{c}{IG} \bigstrut\\
    \hline
    Ailerons & 2,01$\times 10^{-3}$ & 1,99$\times 10^{-3}$ & 1,98$\times 10^{-3}$ & 1,93$\times 10^{-3}$ \bigstrut[t]\\
    Bank Marketing & 2,45$\times 10^{-6}$ & 2,01$\times 10^{-6}$ & 2,83$\times 10^{-5}$ & 1,29$\times 10^{-5}$ \\
    Bejing PM 2.5 & 4,26$\times 10^{-3}$ & 4,63$\times 10^{-3}$ & 3,98$\times 10^{-3}$ & 4,28$\times 10^{-3}$ \\
    Bikes Sharing & 1,79$\times 10^{-3}$ & 2,21$\times 10^{-3}$ & 1,95$\times 10^{-3}$ & 2,38$\times 10^{-3}$ \\
    California & 1,69$\times 10^{-2}$ & 1,89$\times 10^{-2}$ & 1,74$\times 10^{-2}$ & 1,93$\times 10^{-2}$ \\
    CCPP  & 3,17$\times 10^{-3}$ & 3,21$\times 10^{-3}$ & 3,14$\times 10^{-3}$ & 3,18$\times 10^{-3}$ \\
    Mv    & 7,40$\times 10^{-5}$ & 8,03$\times 10^{-5}$ & 7,72$\times 10^{-5}$ & 8,22$\times 10^{-5}$ \bigstrut[b]\\
    \hline
    \hline
\cline{2-5}     [3 x 20] & \multicolumn{1}{c}{BLInG} & \multicolumn{1}{c|}{IG} & \multicolumn{1}{c}{BLInG} & \multicolumn{1}{c}{IG} \bigstrut\\
    \hline
    Ailerons & 2,02$\times 10^{-3}$ & 2,05$\times 10^{-3}$ & 1,94$\times 10^{-3}$ & 2,00$\times 10^{-3}$ \bigstrut[t]\\
    Bank Marketing & 2,40$\times 10^{-7}$ & 1,26$\times 10^{-6}$ & 1,07$\times 10^{-6}$ & 4,24$\times 10^{-6}$ \\
    Bejing PM 2.5 & 4,12$\times 10^{-3}$ & 4,19$\times 10^{-3}$ & 3,83$\times 10^{-3}$ & 3,94$\times 10^{-3}$ \\
    Bikes Sharing & 1,78$\times 10^{-3}$ & 1,80$\times 10^{-3}$ & 1,87$\times 10^{-3}$ & 1,93$\times 10^{-3}$ \\
    California & 1,61$\times 10^{-2}$ & 1,65$\times 10^{-2}$ & 1,68$\times 10^{-2}$ & 1,74$\times 10^{-2}$ \\
    CCPP  & 3,14$\times 10^{-3}$ & 3,13$\times 10^{-3}$ & 3,07$\times 10^{-3}$ & 3,07$\times 10^{-3}$ \\
    Mv    & 1,46$\times 10^{-5}$ & 2,10$\times 10^{-5}$ & 1,51$\times 10^{-5}$ & 2,15$\times 10^{-5}$ \bigstrut[b]\\
    \hline
    \hline
\cline{2-5}     [200,50,200] & \multicolumn{1}{c}{BLInG} & \multicolumn{1}{c|}{IG} & \multicolumn{1}{c}{BLInG} & \multicolumn{1}{c}{IG} \bigstrut\\
    \hline
    Ailerons & 2,09$\times 10^{-3}$ & 2,14$\times 10^{-3}$ & 2,03$\times 10^{-3}$ & 2,07$\times 10^{-3}$ \bigstrut[t]\\
    Bank Marketing & 7,14$\times 10^{-6}$ & 2,16$\times 10^{-4}$ & 1,59$\times 10^{-5}$ & 3,22$\times 10^{-4}$ \\
    Bejing PM 2.5 & 4,60$\times 10^{-3}$ & 5,02$\times 10^{-3}$ & 4,27$\times 10^{-3}$ & 4,68$\times 10^{-3}$ \\
    Bikes Sharing & 2,52$\times 10^{-3}$ & 3,10$\times 10^{-3}$ & 2,53$\times 10^{-3}$ & 3,11$\times 10^{-3}$ \\
    California & 1,81$\times 10^{-2}$ & 2,15$\times 10^{-2}$ & 1,90$\times 10^{-2}$ & 2,22$\times 10^{-2}$ \\
    CCPP  & 3,16$\times 10^{-3}$ & 3,22$\times 10^{-3}$ & 3,10$\times 10^{-3}$ & 3,17$\times 10^{-3}$ \\
    Mv    & 4,87$\times 10^{-5}$ & 5,40$\times 10^{-4}$ & 4,90$\times 10^{-5}$ & 5,33$\times 10^{-4}$ \bigstrut[b]\\
    \hline
   \hline
\cline{2-5}     [5 x 50] & \multicolumn{1}{c}{BLInG} & \multicolumn{1}{c|}{IG} & \multicolumn{1}{c}{BLInG} & \multicolumn{1}{c}{IG} \bigstrut\\
    \hline
    Ailerons & 2,04$\times 10^{-3}$ & 2,10$\times 10^{-3}$ & 1,99$\times 10^{-3}$ & 2,04$\times 10^{-3}$ \bigstrut[t]\\
    Bank Marketing & 6,99$\times 10^{-7}$ & 1,18$\times 10^{-5}$ & 2,46$\times 10^{-6}$ & 2,13$\times 10^{-5}$ \\
    Bejing PM 2.5 & 4,22$\times 10^{-3}$ & 4,35$\times 10^{-3}$ & 3,92$\times 10^{-3}$ & 4,03$\times 10^{-3}$ \\
    Bikes Sharing & 2,02$\times 10^{-3}$ & 2,19$\times 10^{-3}$ & 2,15$\times 10^{-3}$ & 2,32$\times 10^{-3}$ \\
    California & 1,68$\times 10^{-2}$ & 1,77$\times 10^{-2}$ & 1,73$\times 10^{-2}$ & 1,83$\times 10^{-2}$ \\
    CCPP  & 3,10$\times 10^{-3}$ & 3,15$\times 10^{-3}$ & 3,04$\times 10^{-3}$ & 3,09$\times 10^{-3}$ \\
    Mv    & 1,14$\times 10^{-5}$ & 5,01$\times 10^{-5}$ & 1,14$\times 10^{-5}$ & 5,07$\times 10^{-5}$ \bigstrut[b]\\
    \hline
    \hline
\cline{2-5}     [10 x 50] & \multicolumn{1}{c}{BLInG} & \multicolumn{1}{c|}{IG} & \multicolumn{1}{c}{BLInG} & \multicolumn{1}{c}{IG} \bigstrut\\
    \hline
    Ailerons & 2,13$\times 10^{-3}$ & 2,17$\times 10^{-3}$ & 2,12$\times 10^{-3}$ & 2,12$\times 10^{-3}$ \bigstrut[t]\\
    Bank Marketing & 8,32$\times 10^{-7}$ & 5,01$\times 10^{-5}$ & 7,45$\times 10^{-7}$ & 4,19$\times 10^{-5}$ \\
    Bejing PM 2.5 & 4,75$\times 10^{-3}$ & 8,68$\times 10^{-3}$ & 4,45$\times 10^{-3}$ & 8,15$\times 10^{-3}$ \\
    Bikes Sharing & 2,88$\times 10^{-3}$ & 4,16$\times 10^{-3}$ & 2,95$\times 10^{-3}$ & 4,31$\times 10^{-3}$ \\
    California & 1,80$\times 10^{-2}$ & 5,49$\times 10^{-2}$ & 1,88$\times 10^{-2}$ & 5,81$\times 10^{-2}$ \\
    CCPP  & 3,30$\times 10^{-3}$ & 3,49$\times 10^{-3}$ & 3,21$\times 10^{-3}$ & 3,40$\times 10^{-3}$ \\
    Mv    & 3,58$\times 10^{-4}$ & 8,31$\times 10^{-4}$ & 3,51$\times 10^{-4}$ & 7,96$\times 10^{-4}$ \bigstrut[b]\\
    \hline
    \end{tabular} \end{adjustbox}
  \label{tab:m_results} 
\end{table}%

The results are reported in
Tables \ref{tab:m_results} and \ref{tab:mini_win_lose_test}. In Table \ref{tab:m_results} for each of the two algorithms, we report the best values of the objective function found over the 10 runs and the corresponding value of the test error within the time limit of 60 sec.
Overall, BLInG is able to return better solutions with better generalization properties on the test set than those returned by IG, especially when the dimension of the network blows up.  Table \ref{tab:mini_win_lose_test}, provides cumulative information on the 10 runs. In particular, we considered the number of wins or defeats of the BLInG algorithm versus IG over the 10 runs (being a tie a result within the 5\%). Similarly to the BLD algorithm, BLInG seems to perform better when the dimension of the problem increases. In order to assess how much the depth of the network influences algorithms' performance, in Table \ref{tab:ratio} the ratio between best value found in the network [10x50] and [1x50] are provided. Higher values mean that the algorithm has been harmed by the increased depth of the network. Comparing these solutions, BLInG turns out to be less affected by the increased structure of the network and is able to find always similar values regardless of the structure of the network while IG performs worse.

\begin{table}[htbp ]
  \centering
  \caption{Cumulative comparison [\#wins ; \#defs] of BLInG and IG algorithms on training and test error value over the 10 random runs }
  \begin{adjustbox}{max width=\textwidth}
    \begin{tabular}{l|ccccc}
          & \multicolumn{5}{c}{Objective value [\#wins ;  \#defs ]} \\
          & [1 x 50 ] & [3 x 20 ] & [200,  50,  200 ] & [5 x 50 ] & [10 x 50 ] \bigstrut[b ]\\
    \hline
    Ailerons & [\, 0 ; 1 ] & [\, 0 ; 0 ] & [\, 2 ; 3 ] & [\, 1 ; 4 ] & [\, 9 ; 1 ] \bigstrut[t ]\\
    Bank Marketing & [\, 3 ; 7 ] & [\, 4 ; 6 ] & [\, 7 ; 3 ] & [\, 9 ; 1 ] & [10 ; 0 ] \\
    Bejing PM 2.5 & [10 ; 0 ] & [\, 0 ; 1 ] & [\, 4 ; 5 ] & [\, 1 ; 2 ] & [\, 6 ; 0 ] \\
    Bikes Sharing & [10 ; 0 ] & [\, 0 ; 4 ] & [\, 8 ; 2 ] & [\, 4 ; 2 ] & [10 ; 0 ] \\
    California & [\, 8 ; 0 ] & [\, 0 ; 0 ] & [\, 7 ; 3 ] & [\, 1 ; 1 ] & [10 ; 0 ] \\
    CCPP  & [\, 0 ; 0 ] & [\, 0 ; 1 ] & [\, 3 ; 6 ] & [\, 0 ; 2 ] & [\, 5 ; 0 ] \\
    Mv    & [\, 5 ; 1 ] & [\, 5 ; 5 ] & [\, 9 ; 0 ] & [10 ; 0 ] & [10 ; 0 ] \bigstrut[b ]\\
    \hline
    \hline
          & \multicolumn{5}{c}{Test [\#wins ;  \#defs ]} \\
          & [1 x 50 ] & [3 x 20 ] & [200,  50,  200 ] & [5 x 50 ] & [10 x 50 ] \bigstrut[b ]\\
    \hline
    Ailerons & [\, 0 ; 3\ ] & [\, 0 ; 0 ] & [\, 2 ; 3 ] & [\, 1 ; 4 ] & [\, 9 ; 1 ] \bigstrut[t ]\\
    Bank Marketing & [\, 3 ; 7 ] & [\, 3 ; 7 ] & [\, 8 ; 2 ] & [10 ; 0 ] & [10 ; 0 ] \\
    Bejing PM 2.5 & [10 ; 0 ] & [\, 0 ; 0 ] & [\, 4 ; 5 ] & [\, 1 ; 2 ] & [\, 6 ; 0 ] \\
    Bikes Sharing & [10 ; 0 ] & [\, 0 ; 3 ] & [\, 8 ; 2 ] & [\, 3 ; 2 ] & [10 ; 0 ] \\
    California & [\, 6 ; 0 ] & [\, 0 ; 0 ] & [\, 7 ; 3 ] & [\, 1 ; 1 ] & [10 ; 0 ] \\
    CCPP  & [\, 0 ; 0 ] & [\, 0 ; 1 ] & [\, 4 ; 6 ] & [\, 2 ; 2 ] & [\, 4 ; 0 ] \\
    Mv    & [\, 5 ; 1 ] & [\, 5 ; 5 ] & [\, 9 ; 0 ] & [10 ; 0 ] & [10 ; 0 ] \bigstrut[b ]\\
    \hline
    \end{tabular}%
  \end{adjustbox}\label{tab:mini_win_lose_test}%
\end{table}%

% Table generated by Excel2LaTeX from sheet 'Foglio3'
\begin{table}[htbp]
  \centering
  \caption{Ratio between best value found in [10x50] and [1x50]}
    \begin{tabular}{l|cc|cc}
    \multicolumn{1}{p{4.215em}|}{Network } & \multicolumn{2}{c|}{Objective value} & \multicolumn{2}{c}{Test Error} \bigstrut[b]\\
\cline{2-5}    [1x50]/[10x50] & BLInG & IG    & BLInG & IG \bigstrut\\
    \hline
    Ailerons & 1,06  & 1,09  & 1,07  & 1,10 \bigstrut[t]\\
    Bank Marketing & 0,34  & 24,91 & 0,03  & 3,25 \\
    Bejing PM 2.5 & 1,12  & 1,88  & 1,12  & 1,91 \\
    Bikes Sharing & 1,61  & 1,89  & 1,51  & 1,81 \\
    California & 1,06  & 2,91  & 1,08  & 3,01 \\
    CCPP  & 1,04  & 1,09  & 1,02  & 1,07 \\
    Mv    & 4,84  & 10,35 & 4,54  & 9,68 \bigstrut[b]\\
    \hline
    \end{tabular}%
  \label{tab:ratio}%
\end{table}%

\begin{figure}[tbp]
     \begin{center}
        \subfigure[Ailerons ]{%
            \label{fig:first2}
            \includegraphics[width=0.4\textwidth]{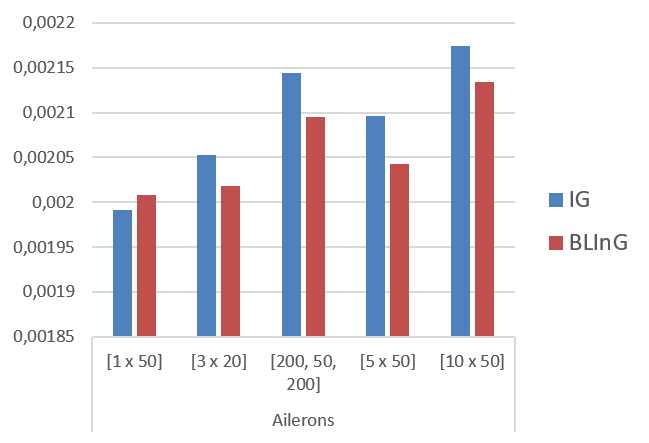}
        }%
        \subfigure[Bank Marketing ]{%
           \label{fig:second2}
           \includegraphics[width=0.4\textwidth]{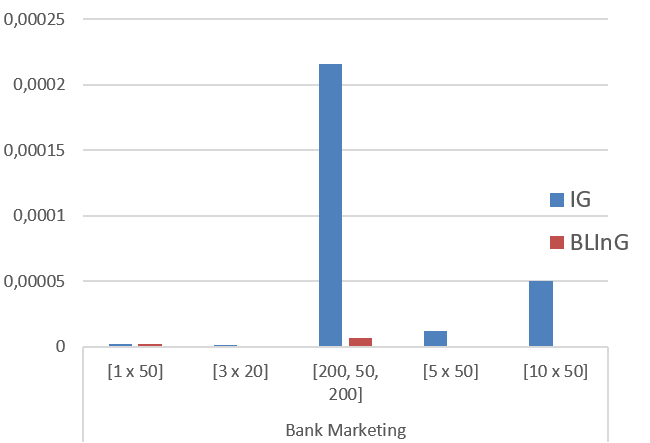}
        }\\ %  ------- End of the first row ----------------------%
        \subfigure[Bejing PM 2.5]{%
            \label{fig:third2}      
            \includegraphics[width=0.4\textwidth]{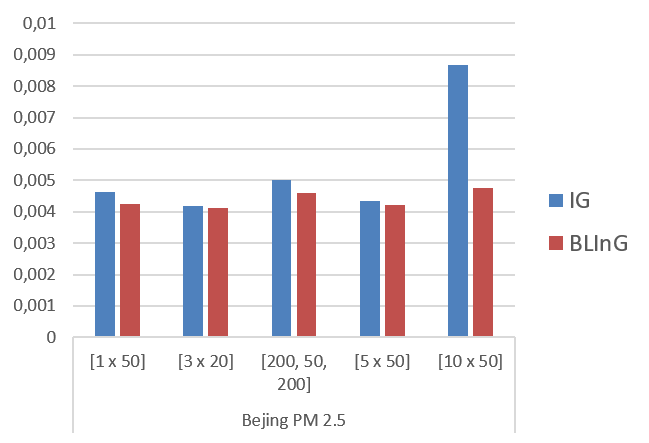}
        }%
        \subfigure[ Bikes Sharing]{%
            \label{fig:fourth7}
            \includegraphics[width=0.4\textwidth]{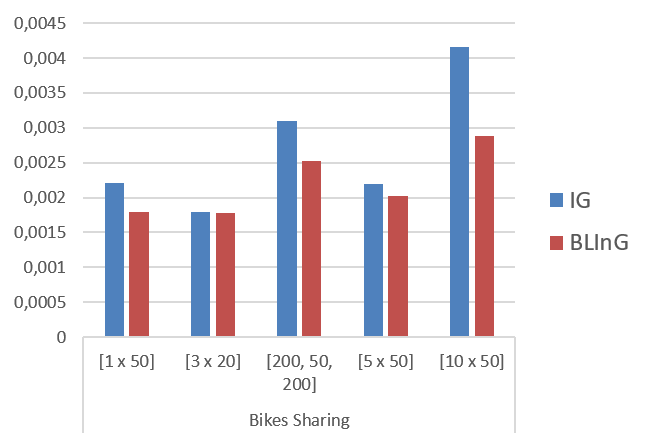}
        }\\% ------- End of the second row ----------------------%
        
        \subfigure[ California ]{%
            \label{fig:third3}
            \includegraphics[width=0.4\textwidth]{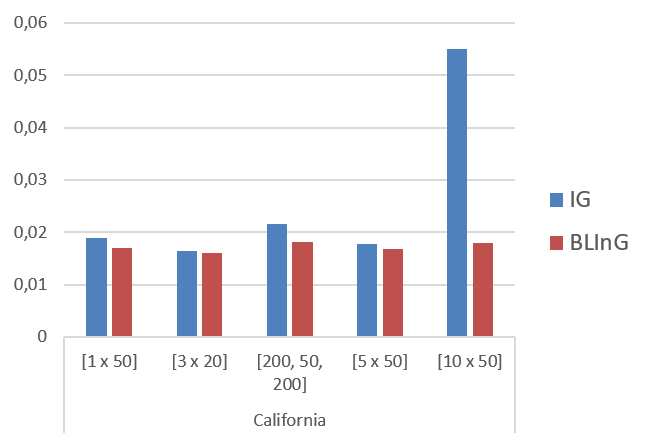}
        }%
        \subfigure[CCPP ]{%
            \label{fig:fourth3}
            \includegraphics[width=0.4\textwidth]{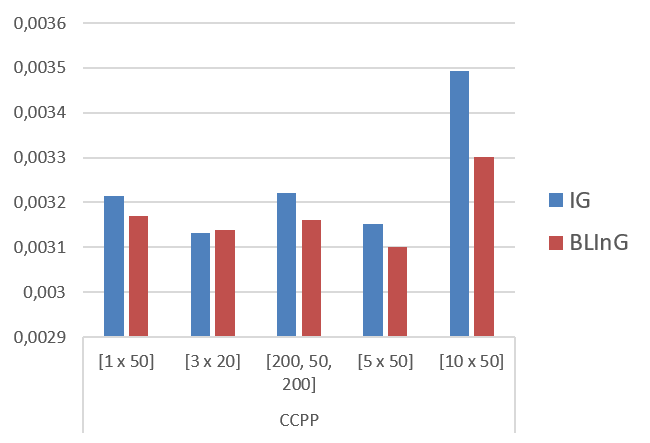}
        }\\% ------- End of the third row ----------------------%
        \subfigure[Mv]{%
            \label{fig:fourth4}
            \includegraphics[width=0.4\textwidth]{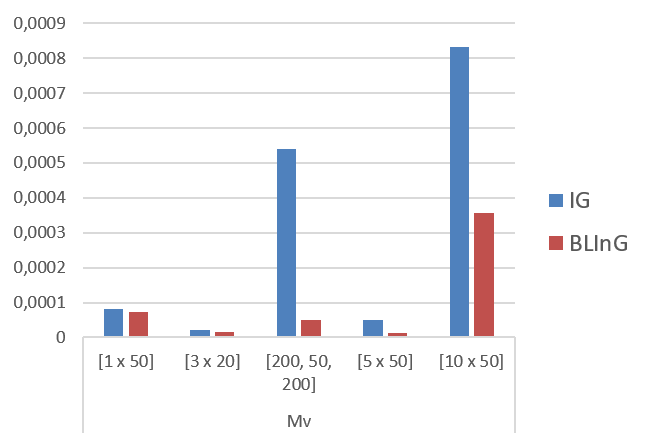}
        }
    \end{center}
    \caption{%
        Best training error value obtained by IG and BLInG within 60 seconds.
     }%
   \label{fig:subfigures1}
\end{figure}

\section{Conclusions}\label{Conclusions}
In this work, we focused on the application of batch and online Block Coordinate Decomposition methods for training Deep Feedforward Neural Networks. We studied how the layered structure of a DFNN can be effectively leveraged for training these models and we defined general  \textit{batch} and \textit{minibatch} block layer decomposition schemes.  Extensive numerical experiments over different network architectures have been performed to assess how  performance of state-of-the-art algorithms can be improved. Overall, the application of BCD methods turned out to be effective in avoiding bad attraction regions and speeding up the training process of DFNNs. Both the two proposed methods outperformed no-variable-decomposed counterparts leading to better solutions with better generalization properties as well.

% In our experiments we do not exploit the strict convexity of the objective function with respect to the weights of the last layer, that could be solved exactly.

% While for the \textit{batch} BLD method convergence to stationary points is already prooved, a future direction of research could be to comprehend under which assumptions convergence of the \textit{minibatch} framework can be guaranteed.

% Another interesting point to further investigate could be the definition of a \textit{minibatch} optimization scheme able to leverage the convexity of the objective function with respect to the last layer, similarly to what done in \cite{bravi2014incremental}.

\appendix
\section*{Appendix} \label{Appendix}
Before providing convergence proof of algorithm BLD, we need to recall some properties guaranteed by Armijo Linesearch whose proof can be found here \cite{Grippo2016}. 

\begin{lemma}\label{Lemma_Arm}
Armijo Linesearch determines a stepsize $\alpha^k\in(0,a]$ within a finite number of iterations. Furthermore, given a sequence $\lbrace w^k\rbrace$ with an accumulation point $\bar{w}$, if the following limit holds
$$\lim_{k\rightarrow \infty}f(w^k)-f(w_1^k,...,w_\ell^k+\alpha^kd^k_\ell,...,w_L)=0$$
then $\nabla_\ell f(\bar{w})=0$
\end{lemma}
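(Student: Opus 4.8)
The plan is to establish the two assertions of Lemma~\ref{Lemma_Arm} separately, both through the classical Armijo machinery specialized to block $\ell$. Throughout I take the search direction to be the block steepest descent direction $d_\ell^k=-\nabla_\ell f(w^k)$, embedded in the full space with zeros outside block $\ell$; consequently the directional derivative in the Armijo test collapses to $\nabla f(w^k)^Td^k=-\|\nabla_\ell f(w^k)\|^2$. For finite termination I would use differentiability of $f$ to write $f(w^k+\alpha d^k)-f(w^k)=\alpha\nabla f(w^k)^Td^k+o(\alpha)$, so that the quantity tested in the while-loop equals $f(w^k+\alpha d^k)-f(w^k)-\gamma\alpha\nabla f(w^k)^Td^k=(1-\gamma)\alpha\nabla f(w^k)^Td^k+o(\alpha)$. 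When $\nabla_\ell f(w^k)\neq0$, the leading term is strictly negative (since $\gamma\in(0,1)$) and dominates for small $\alpha$, so the bracketed quantity is negative and the while-condition is violated once $\alpha$ is small enough. Because the trial stepsizes form the geometric sequence $a,\delta a,\delta^2 a,\dots$, this smallness is reached after finitely many backtracks, producing $\alpha^k\in(0,a]$.

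For the stationarity claim, the starting point is the accepted-step inequality, which holds by construction of the linesearch: $f(w^k)-f(w_1^k,\dots,w_\ell^k+\alpha^kd_\ell^k,\dots,w_L^k)\ge\gamma\alpha^k\|\nabla_\ell f(w^k)\|^2\ge0$. The hypothesis forces the left-hand side to $0$, hence $\alpha^k\|\nabla_\ell f(w^k)\|^2\to0$. I then fix a subsequence $K$ with $w^k\to\bar w$; by continuity of $\nabla f$ (guaranteed by the smoothness of $f$ in \eqref{optimization_problem}) it suffices to exhibit a sub-subsequence along which $\|\nabla_\ell f(w^k)\|\to0$. The easy case is $\limsup_{k\in K}\alpha^k>0$: on a sub-subsequence $\alpha^k$ is bounded away from $0$, so the product tending to $0$ immediately yields $\|\nabla_\ell f(w^k)\|\to0$ and thus $\nabla_\ell f(\bar w)=0$.

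The remaining, and main, case is $\alpha^k\to0$ on $K$, where the gradient norm could a priori stay large. Here I exploit that a small accepted step means the previous trial was rejected: once $k$ is large enough that $\alpha^k<a$, the stepsize $\alpha^k/\delta\le a$ was tested and failed the Armijo test, i.e. $f(w_\ell^k+(\alpha^k/\delta)d_\ell^k)-f(w^k)>\gamma(\alpha^k/\delta)\nabla f(w^k)^Td^k$. Dividing by $\alpha^k/\delta>0$ and applying the mean value theorem to the scalar map $\phi(t)=f(w_1^k,\dots,w_\ell^k+td_\ell^k,\dots,w_L^k)$, the incremental ratio equals $\phi'(\xi^k)=\nabla_\ell f(\hat w^k)^Td_\ell^k$ for some $\xi^k\in(0,\alpha^k/\delta)$, where $\hat w^k$ denotes $w^k$ with block $\ell$ perturbed by $\xi^kd_\ell^k$; this gives $\nabla_\ell f(\hat w^k)^Td_\ell^k>-\gamma\|\nabla_\ell f(w^k)\|^2$. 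Since $\xi^k\to0$ and $d_\ell^k\to-\nabla_\ell f(\bar w)$ is bounded on $K$, one has $\hat w^k\to\bar w$; passing to the limit with continuity of $\nabla f$ yields $-\|\nabla_\ell f(\bar w)\|^2\ge-\gamma\|\nabla_\ell f(\bar w)\|^2$, and because $\gamma\in(0,1)$ this forces $(1-\gamma)\|\nabla_\ell f(\bar w)\|^2\le0$, hence $\nabla_\ell f(\bar w)=0$.

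I expect the delicate point to be precisely this last case: one must justify that the ``previous trial was rejected'' inequality is genuinely available (it requires $\alpha^k<a$, which holds eventually because $\alpha^k\to0$), and that the mean-value intermediate point $\hat w^k$ truly converges to $\bar w$ so that continuity of $\nabla f$ can be invoked termwise. The easy case and the finite-termination part are routine; the technical care lies in combining the rejected-step estimate, the mean value theorem, and the limit along $K$ into the final contradiction-free deduction $\nabla_\ell f(\bar w)=0$.
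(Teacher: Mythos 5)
Your proof is correct and, modulo presentation, it is exactly the classical argument: the paper does not prove Lemma~\ref{Lemma_Arm} itself but quotes it from \cite{Grippo2016}, and the standard proof proceeds precisely through your steps --- finite termination via $f(w^k+\alpha d^k)-f(w^k)-\gamma\alpha\nabla f(w^k)^Td^k=(1-\gamma)\alpha\nabla f(w^k)^Td^k+o(\alpha)<0$ for small $\alpha$, and then the dichotomy in which either $\alpha^k$ stays bounded away from zero along the convergent subsequence (so the hypothesis forces $\|\nabla_\ell f(w^k)\|\to 0$ directly) or $\alpha^k\to 0$, in which case the rejected trial step $\alpha^k/\delta$ combined with the mean value theorem gives $(1-\gamma)\|\nabla_\ell f(\bar w)\|^2\le 0$. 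You also handle correctly the two delicate points you flag yourself: the rejected-step inequality is indeed available as soon as $\alpha^k<a$ (which holds eventually since the trial stepsizes are $a,\delta a,\delta^2a,\dots$, and note that $\alpha^k<a$ automatically excludes the degenerate case $d_\ell^k=0$), and $\hat w^k\to\bar w$ follows from $\xi^k\le\alpha^k/\delta\to 0$ together with boundedness of $d_\ell^k=-\nabla_\ell f(w^k)$ along the subsequence by continuity of $\nabla f$, so nothing needs to be repaired.
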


First we show that an Armijo Linesearch coupled with a steepest descent method is always able to find a point $\tilde{w}$ such that conditions \ref{cond1} and \ref{cond2} are satisfied.
\begin{theorem}
Given a function $f:\rm I\!R^n \rightarrow [0,\infty)$, conditions (\ref{cond1}) and (\ref{cond2}) can be satisfied by the application of a finite number $n$ of iterations of the gradient method with an Armijo Linesearch.\\
\end{theorem}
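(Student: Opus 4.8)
The plan is to realise the block update $\tilde w_\ell$ as the endpoint of $n$ consecutive steepest-descent steps on the $\ell$-th block, each equipped with the Armijo linesearch of Algorithm~\ref{alg:Armijo}, and to show that both requested inequalities survive this inner loop. Freezing all blocks but $\ell$, I set $w^{(0)}=w^k$ and, for $j=0,\dots,n-1$, $w^{(j+1)}=w^{(j)}-\alpha^{(j)}\nabla_{w_\ell}f(w^{(j)})$ with $\alpha^{(j)}\in(0,a]$ returned by the linesearch (finite by Lemma~\ref{Lemma_Arm}); the candidate is $\tilde w_\ell=w^{(n)}_\ell$. The acceptance test of Algorithm~\ref{alg:Armijo} gives, at every step, the sufficient-decrease estimate
\[
f(w^{(j)})-f(w^{(j+1)})\ \ge\ \gamma\,\alpha^{(j)}\,\|\nabla_{w_\ell}f(w^{(j)})\|^2\ =\ \frac{\gamma}{\alpha^{(j)}}\,\|w^{(j+1)}-w^{(j)}\|^2\ \ge\ \frac{\gamma}{a}\,\|w^{(j+1)}-w^{(j)}\|^2,
\]
where the last inequality uses $\alpha^{(j)}\le a$. (If the block gradient vanishes at some step the loop stops at a block-stationary point and both conditions hold trivially.)

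Condition~\eqref{cond1} is the easy half. By construction the \emph{first} inner step lands exactly on the point $w_\ell^k+\alpha^kd_\ell^k$ that appears on the right-hand side of \eqref{cond1}; since every later Armijo step can only lower $f$, monotonicity yields $f(\tilde w_\ell)=f(w^{(n)})\le f(w^{(1)})=f(w_\ell^k+\alpha^kd_\ell^k)$, which is \eqref{cond1}.

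The substantive step is \eqref{cond2}, and the hard part is that its right-hand side is measured through the \emph{total} displacement $\|\tilde w_\ell-w_\ell^k\|$, whereas the per-step bound above only controls the sum of the \emph{individual} squared displacements. Telescoping the per-step estimates gives $f(w_\ell^k)-f(\tilde w_\ell)\ge \frac{\gamma}{a}\sum_{j=0}^{n-1}\|w^{(j+1)}-w^{(j)}\|^2$, so I would close the gap with the triangle inequality followed by Cauchy--Schwarz,
\[
\|\tilde w_\ell-w_\ell^k\|^2\ \le\ \Bigl(\sum_{j=0}^{n-1}\|w^{(j+1)}-w^{(j)}\|\Bigr)^2\ \le\ n\sum_{j=0}^{n-1}\|w^{(j+1)}-w^{(j)}\|^2,
\]
which costs only the harmless finite factor $n$. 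Combining the two displays gives $f(w_\ell^k)-f(\tilde w_\ell)\ge \frac{\gamma}{an}\|\tilde w_\ell-w_\ell^k\|^2$, so the choice $\sigma(t)=\frac{\gamma}{an}\,t^2$ --- continuous, strictly increasing, and vanishing only at $t=0$, hence a genuine forcing function --- reproduces \eqref{cond2} exactly. Since $a$, $\gamma$ and $n$ are fixed throughout the algorithm, this single $\sigma$ serves at every outer iteration $k$, which is precisely what the convergence argument behind Theorem~\ref{th:convBLD} requires.
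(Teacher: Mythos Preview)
Your proof is correct and follows the same route as the paper's: telescope the Armijo sufficient-decrease inequalities over the $n$ inner steps, then pass from the sum of per-step squared displacements to the total squared displacement to produce a quadratic forcing function. Your execution is in fact cleaner than the paper's in two places --- you correctly invoke Cauchy--Schwarz to pick up the factor $n$ (the paper writes $\|w^{k+n}-w^k\|^2\le\sum_i\|w^{k+i+1}-w^{k+i}\|^2$ and calls it ``triangle inequality,'' which is not true as stated), and you use the uniform bound $\alpha^{(j)}\le a$ to arrive at a single, iteration-independent forcing function $\sigma(t)=\tfrac{\gamma}{an}t^2$, whereas the paper's $\sigma$ carries a stray inner index $i$.
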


\begin{proof}
(\ref{cond1}) is trivially satisfied $\forall k\geq 1$ since $\nabla f(w)^Td^k=-\Vert \nabla f(w)\Vert^2\leq 0$ $\forall k$. Concerning condition (\ref{cond2}), since $\alpha^k$ is chosen according to Armijo Linesearch, we have that
$$f(w^{k+1})\leq f(w^{k})-\gamma \alpha^k \Vert \nabla f(w^k)\Vert^2 \qquad \gamma\in (0,1)$$
Iterating this condition for $n$ steps we get:
\begin{equation} \label{eq: k_steps}
f(w^{k+n})\leq f(w^k)-\gamma \Big( \sum_{i=0}^{n-1} \alpha^{k+i}\Vert \nabla f(w^{k+i})\Vert^2\Big)
\end{equation}

Since $\alpha^k\nabla f(w^k)= w^{k}-w^{k+1}$ we have that
\begin{equation} \label{eq:update}
{\alpha^k}\Vert\nabla f(w^{k})\Vert^2=\frac{\Vert w^{k}-w^{k+1}\Vert^2}{\alpha^k}
\end{equation}
using (\ref{eq: k_steps}) and (\ref{eq:update}) we obtain:
\begin{equation}
f(w^{k+n})\leq f(w^{k})-\gamma \sum_{i=0}^{n-1}\frac{ \Vert w^{k+i}-w^{k+1+i}\Vert^2}{\alpha^{k+i}}
\end{equation}
By triangle inequality we have that:
$$\Vert w^{k+n}-w^{k}\Vert^2 \leq \sum_{i=0}^{n-1} \Vert w^{k+i}-w^{k+1+i}\Vert^2 
$$
namely:
$$- \sum_{i=0}^{n-1} \Vert w^{k+i}-w^{k+1+i}\Vert^2 \leq-\Vert w^{k+n}-w^{k}\Vert^2$$
which implies:
\begin{align*}
f(w^{k+n}) & \leq f(w^{k})-\gamma \frac{1}{\alpha^{k+i}}  \Vert w^{k+n}-w^{k}\Vert^2\\
& \leq f(w^{k})-\sigma\Big(  \Vert w^{k+n}-w^{k}\Vert^2\Big)
\end{align*}
The function $\sigma(t^k)=\gamma \frac{1}{\alpha^{k+i}} (t^k)^2$ is a forcing function because:
$$\lim_{k\rightarrow\infty}\gamma \frac{1}{\alpha^{k+i}} (t^k)^2=0 \implies \lim_{k\rightarrow\infty}(t^k)^2=0$$
indeed $\gamma>0$ and the stepsize $\alpha^k\in (0, a]$ $\forall k$.\\
\end{proof}

We provide the proof of Theorem \ref{th:convBLD} for the convergence of the  BLD method to stationary non minima points with a {\textit{forward}} cyclical selection of the blocks. This proof can be easily extended to other selections rules by tedious variations of the one provided below.

\begin{proof} {\bf of Theorem \ref{th:convBLD}}
With an abuse of notation, we will refer to $z_{\ell+1}^k$ as the update at iteration $k$ where all the blocks have been updated until block $\ell$.
$$z_{\ell+1}^k=(w_1^{k+1},...,w_\ell^{k+1},w_{\ell+1}^k,...,w_L^k)$$
Note that $z_1^k=w^k$ and $z_{L+1}^k= w^{k+1}$.
Thanks to condition \ref{cond1} we have that
\begin{equation}\label{_seq_z}
f(z_{\ell+1}^k)\leq f(w_1^{k+1},...,w_\ell^{k}+\alpha^kd_\ell^k,...,w_L^k)\leq f{z_{\ell}^k}
\end{equation}
which implies
\begin{equation}\label{_riduco_f}
f(w^{k+1})\leq f(z_{\ell}^k)\leq f(w^k).
\end{equation}
Being $f(w)$ defined as the sum of non negative terms we have that $f(w^k)\geq 0$, and being the function coercive thanks to the regularization term $\rho \|w\|^2$, we have that its level curves are compact. Hence, being the sequence $f(w^k)$ a bounded decreasing sequence over a compact set we have that
\begin{equation}\label{_f_bar}
\lim_{k\rightarrow\infty}f(w^k)=\bar{f}.
\end{equation}
(\ref{_f_bar}) and (\ref{_seq_z}) imply that also the intermediate updates goes to the same limit
\begin{equation}\label{_f_bar_z}
\lim_{k\rightarrow\infty}f(z_{\ell}^k)=\bar{f} \qquad \forall \ell=1,...,L
\end{equation}
As a consequence, by (\ref{_seq_z}) we have that
\begin{equation}\label{armijo_a_zero}
\lim_{k\rightarrow\infty}f(z_{\ell+1}^k)-f(w_1^{k+1},...,w_\ell^k+\alpha^kd_\ell^k,w_{\ell+1}^k,...,w_L^k)=0
\end{equation}
Suppose that for a fixed index $\ell\in\lbrace1,...,L \rbrace$ $z_\ell^k$ has an accumulation point $\bar{z}_\ell$, Thanks to Armijo Linesearch we have that
\begin{equation}\label{z_bar_stazionario}
\nabla_\ell f(\bar{z}_\ell)=0 \qquad \forall \ell=1,...,L
\end{equation}
Thanks to condition (\ref{cond2}) and (\ref{_f_bar}) we have that the distance between two iterates goes to zero
\begin{equation}\label{_dist_zero}
\lim_{k\rightarrow \infty}\|z_{\ell+1}^k-z_\ell^k\|=0
\end{equation}
We can consider an accumulation point of $w^k$, $\bar{w}$. Since $w^k=z_1^k$, thanks to (\ref{_dist_zero}) we have that $\bar{w}$ is also an accumulation point for all the subsequences $z_{\ell}^k, \ell\in \lbrace 1,...,L \rbrace$. Thanks to (\ref{z_bar_stazionario}) we can conclude that $\bar{w}$ is a stationary point.

\end{proof}

\bibliographystyle{abbrv}

\bibliography{milib}   % name your BibTeX data base

\end{document}